\documentclass[12pt]{article}
\usepackage{amsthm}
\usepackage{amsmath}
\usepackage{amsfonts}
\usepackage{amssymb}
\usepackage{amscd}
\usepackage{url}
\usepackage{epsfig}
\usepackage{lscape}
\usepackage{url}
\usepackage{color}
\usepackage{hyperref}
\date{}

\newtheorem{thm}{Theorem}[section]
\newtheorem{cor}[thm]{Corollary}
\newtheorem{lem}[thm]{Lemma}
\newtheorem{prop}[thm]{Proposition}

\newtheorem{ex}[thm]{Example}

\theoremstyle{definition}

\newtheorem{example}[thm]{Example}
\newtheorem{remark}[thm]{Remark}

\newcommand{\Z}{\mathbb{Z}}
\newcommand{\N}{\mathbb{N}}
\newcommand{\Q}{\mathbb{Q}}
\newcommand{\PP}{\mathbb{P}}
\newcommand{\TP}{\mathbb{T}{\mathbb{P}}}
\newcommand{\HF}{HF}
\newcommand{\Fl}{\operatorname{Fl}}
\newcommand{\Osh}{\mathcal O}

\begin{document}

\bibliographystyle{plain}
\title{\bf The Hilbert scheme of the diagonal \\
in a product of projective spaces}

\author{Dustin Cartwright  and  Bernd Sturmfels }

\maketitle

\begin{abstract}
\noindent The diagonal in a product of projective spaces
is cut out by the ideal of $2 {\times} 2$-minors of
a matrix of unknowns. The multigraded Hilbert scheme which classifies its
degenerations 
has a unique Borel-fixed ideal.
This Hilbert scheme is generally reducible, and its
main component is a compactification of
${\rm PGL}(d)^n/{\rm PGL}(d)$.
For $n=2$ we recover
the manifold of complete collineations. For
projective lines we obtain a novel space of trees
that is irreducible but singular.
All ideals in our Hilbert scheme are radical.
We also explore connections to affine buildings and Deligne schemes.
\end{abstract}

\section{Introduction}

Multigraded Hilbert schemes parametrize
families of ideals in a polynomial ring that share
the same Hilbert function with respect to some
grading by an abelian group~\cite{HS}. We are interested in the following
particular case.
Let $X = (x_{ij})$ be a $d {\times} n$-matrix of unknowns.
We fix the polynomial ring $K[X]$ over a field $K$ with the
 $\Z^n$-grading by column degree, i.e.\ ${\rm deg}(x_{ij}) = e_j$.
In this grading, the Hilbert function of the
polynomial ring $K[X]$ equals
\begin{equation*}
\N^n  \rightarrow \N \,,\,\,
(u_1,\ldots,u_n)\, \mapsto\, \prod_{i=1}^n \binom{u_i + d-1}{d-1} .
\end{equation*}
We study the multigraded Hilbert scheme   $H_{d,n}$,
 which parametrizes $\Z^n$-homogeneous ideals $I$ in $K[X]$
such that $K[X]/I$ has the Hilbert function
\begin{equation}
\label{eqn:ourHF}
\N^n  \rightarrow \N \,,\,\,
(u_1,\ldots,u_n)\, \mapsto\, \binom{u_1 {+} u_2 + \cdots + u_n+d-1}{d-1}. 
\end{equation}
The key example is the ideal $I_2(X)$
that is generated by the  $2 {\times} 2$-minors of $X$,
and whose quotient is indeed $\Z^n$-graded
with Hilbert function~(\ref{eqn:ourHF}).
The Hilbert scheme $H_{d,n}$  has the following geometric interpretation.
Each $\Z^n$-homogeneous ideal in $K[X]$ specifies a subscheme
of the product of projective spaces  $(\PP^{d-1})^n = 
\PP^{d-1} \! \times \cdots \times \PP^{d-1}$. The subscheme
specified by the ideal $I_2(X)$ is the diagonal embedding of 
$\PP^{d-1}$ in $(\PP^{d-1})^n$. Our Hilbert scheme 
$H_{d,n}$ is a natural parameter space for
degenerations of this diagonal in $(\PP^{d-1})^n$.

The results obtained in this paper are as follows.
In Section 2 we prove that all ideals $I$ in $H_{d,n}$
are radical and Cohen-Macaulay. This result is derived 
by identifying a distinguished Borel-fixed ideal $Z$ 
with these properties. It confirms a conjecture
on multilinear Gr\"obner bases made by Conca in \cite{Conca}.

In Section 3 we show that $I_2(X)$ and one of its initial monomial ideals
are smooth points on $H_{d,n}$. The 
irreducible component containing these points is an
equivariant compactification of the homogeneous space $G^n/G$ where
$G = {\rm PGL}(d)$, and $G \subset G^n$ is the diagonal embedding. For $n = 2$
we recover Thaddeus' construction in  \cite{Th} of the space of complete
collineations.
The relationship of our compactification of $G^n/G$
to those constructed by
Lafforgue in \cite{Laf} will be discussed in
Remark \ref{remark:Lafforgue} and Example \ref{example:twothree}.
  %Proposition \ref{prop:grothen} relates $H_{d,n}$ to the
  %classical Grothendieck Hilbert scheme.

Section 4 is concerned with the case $d=2$, and we regard its results to be the
main contribution of this paper.
We show that $H_{2,n}$ is irreducible, but singular
for $n \geq 4$, and we determine its combinatorial structure.
Each point in $H_{2,n}$ corresponds to a certain tree of projective lines.
Among these are precisely $2^n (n+1)^{n-2} $ monomial ideals, one for each
tree on $n+1$ unlabeled vertices and $n$ labeled directed edges,
and these form a graph.

In Section 5 we study the case $d=n=3$. These
are the smallest parameters 
for which the 
multigraded Hilbert scheme $H_{d,n}$ is reducible.
We show that $H_{3,3}$
is the reduced union of seven irreducible components,
with the main component of dimension $16$ 
parametrizing degenerations of the diagonal
in $\PP^2 {\times} \PP^2 {\times} \PP^2$.
We list all monomial ideals on
$H_{3,3}$ and their incidence relations.

Section 6 outlines a connection to convexity in
affine buildings and tropical geometry. Extending
previous results in \cite{BY, KT}, we show how
Gr\"obner degenerations on $H_{d,n}$ can be used
to compute special fibers of Deligne schemes.

\section{On a conjecture of Conca}

Our plan is to derive
Conca's Conjecture~4.2 in~\cite{Conca}
from the following result.

\begin{thm}
\label{thm:radical}
All ideals $I$ corresponding to points in $H_{d,n}$ are radical ideals.
\end{thm}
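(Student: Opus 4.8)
The plan is to reduce the statement to a single monomial ideal. Since passing to an initial ideal preserves the $\Z^n$-graded Hilbert function, every point of $H_{d,n}$ admits Gr\"obner degenerations that again lie in $H_{d,n}$. The group of $\Z^n$-graded $K$-algebra automorphisms of $K[X]$ acts on $H_{d,n}$, and because the graded piece of degree $e_j$ is spanned by the $j$th column $x_{1j},\dots,x_{dj}$, this group is ${\rm GL}(d)^n$, acting on each column block separately, with Borel subgroup $B^n$. Radicality is invariant under this action, so it suffices to verify it after an arbitrary change of coordinates and a suitable degeneration.

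First I would pass to the generic initial ideal. For an arbitrary $I \in H_{d,n}$, fix a term order $\prec$ refining the column grading and a generic $g \in {\rm GL}(d)^n$. By the multigraded version of Galligo's theorem, the initial ideal $\operatorname{in}_\prec(gI)$ is fixed by the Borel group $B^n$, and it still lies in $H_{d,n}$ since the Hilbert function (\ref{eqn:ourHF}) is unchanged. Invoking the fact that $H_{d,n}$ possesses a \emph{unique} Borel-fixed ideal $Z$, we conclude $\operatorname{in}_\prec(gI) = Z$ for every $I$. Thus all points of the Hilbert scheme share the same generic initial ideal.

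Second I would prove that $Z$ is radical. This requires describing $Z$ explicitly as a monomial ideal and verifying that it is \emph{squarefree}, from which radicality is immediate; along the way one also checks the Cohen--Macaulay property needed for Conca's conjecture, for instance through a shelling of the associated simplicial complex. Finally, to transfer radicality back to $I$, consider the Gr\"obner degeneration attached to a generic weight vector realizing $\prec$ on $gI$: this is a flat family over $\mathbb{A}^1$ whose special fiber is $V(Z)$ and whose general fiber is isomorphic to $V(gI)$. Reducedness of the fibers of a flat family is an open condition on the base, so reducedness of the special fiber $V(Z)$ forces the general fiber to be reduced; hence $gI$, and therefore $I = g^{-1}(gI)$, is radical. (Equivalently, one may cite the standard lemma that $\operatorname{in}_\prec(J)$ radical implies $J$ radical.)

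The main obstacle is the identification of $Z$: one must exhibit the unique Borel-fixed monomial ideal with Hilbert function (\ref{eqn:ourHF}), prove that it is the only Borel-fixed point, and—the crux—show that it is squarefree. Everything else (invariance of the Hilbert function under initial ideals, Galligo's theorem in the multigraded setting, and openness of the reduced locus in a flat family) is formal, so the combinatorial analysis of $Z$ is where the real work lies.
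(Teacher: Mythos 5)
Your reduction strategy is exactly the one the paper uses: pass to the multigraded generic initial ideal, which is Borel-fixed and still lies on $H_{d,n}$, invoke uniqueness of the Borel-fixed point to conclude that every ideal has $Z$ as its gin, check that $Z$ is squarefree, and transfer radicality back via the standard lemma that $\operatorname{in}_\prec(J)$ radical implies $J$ radical. That framework is sound. The problem is that the two claims on which it hinges --- that $H_{d,n}$ has a \emph{unique} Borel-fixed ideal, and that this ideal is squarefree --- are precisely the content of the theorem, and you leave both unproved. You say you are ``invoking the fact'' of uniqueness, but there is no such fact available to invoke; it is not a formal consequence of anything you have set up, and you acknowledge in your last paragraph that this is where the real work lies without doing any of it. As written, the proposal is an outline with the load-bearing steps missing.

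For the record, here is how the paper fills those gaps. Squarefreeness is obtained for free by \emph{constructing} $Z$ as an intersection of monomial primes: for each $u$ in the set $U$ of (\ref{eqn:ineqal}) one takes $Z_u = \langle x_{ij} : i \le u_j \rangle$ and sets $Z = \bigcap_{u \in U} Z_u$; Conca's results in \cite[\S 5]{Conca} show this $Z$ has the Hilbert function (\ref{eqn:ourHF}) and equals $\operatorname{gin}(I_2(X))$, so it genuinely lies on $H_{d,n}$. Uniqueness is the harder point, and the paper proves it with multidegrees from \cite[\S 8.5]{MS}: the multidegree is determined by the Hilbert series, so every $I \in H_{d,n}$ has multidegree $\sum_{u \in U} \mathbf{t}^u$; each $Z_u$ is the unique unmixed Borel-fixed monomial ideal of multidegree $\mathbf{t}^u$ \cite[Theorem 8.44]{MS}, so the minimal primes of any Borel-fixed $I \in H_{d,n}$ must be exactly the $Z_u$; hence $I \subseteq \sqrt{I} = Z$, and equality of Hilbert functions in a positive grading forces $I = Z$. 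Without an argument of this kind (or some substitute), your proof does not go through: a priori there could be several Borel-fixed points, some of them non-radical, and the gin of an arbitrary $I$ could land on one of those.
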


\begin{proof}
We may assume that $K$ is an infinite field.
Let  $G = {\rm PGL}(d,K)$, the group of invertible
$d \times d$-matrices modulo scaling, let
 $B$ be the Borel subgroup of images of upper
triangular matrices in $G$, and  let $T$ be the algebraic torus
of images of diagonal matrices in $G$. Then $T^n$ is 
a maximal torus in $G^n$, and $B^n $ is a Borel subgroup in $G^n$.
We consider the action of these groups
on the Hilbert scheme $H_{d,n}$. The $T^n$-fixed points
of $H_{d,n}$ are the  monomial ideals that have
the same  $\Z^n$-graded Hilbert function as $I_2(X)$.
It suffices to assume that $I$ is such a monomial ideal
because every other ideal $J \in H_{d,n}$ can be degenerated to a
monomial ideal $I = {\rm in}(J)$ via Gr\"obner bases, 
and if ${\rm in}(J)$ is radical then so is $J$.

We can further assume that $I$ is {\em Borel-fixed},
which means that $I$ is fixed under the action of $B^n$.
Indeed, if $A_1, A_2, \ldots , A_n$
are generic matrices of $G$ then we replace the
ideal $I$ first by its image $I' = (A_1,A_2, \ldots,A_n) \circ I$,
and then by the initial monomial ideal ${\rm in}(I')$.
The ideal ${\rm in}(I') = {\rm gin}(I)$ is 
the {\em multigraded generic initial ideal}.
The same approach as in \cite[\S 15.9.2]{Eis} shows that
 ${\rm gin}(I)$ is Borel-fixed.
Moreover, if ${\rm gin}(I)$ is radical then so is $I$. Hence,
it suffices to  show that every
Borel-fixed ideal $I$ in $H_{d,n} $ is a radical ideal.

Our result will be a direct consequence of the following two claims:

\noindent Claim 1:
{\em There is precisely one Borel-fixed ideal $Z$ in $H_{d,n}$.}

\noindent Claim 2: {\em The unique Borel-fixed ideal $Z$ is radical.}

\smallskip

We first describe the ideal $Z$ and then prove that it has these 
 properties.
Let $u$ be any vector in the set
\begin{equation}
\label{eqn:ineqal}
U = \left\{(u_1, \ldots, u_n) \in \Z^n : 
0 \leq u_i \leq d-1 \hbox{ and } 
\textstyle\sum_i u_i = (n-1)(d-1)\right\}.
\end{equation}
We write $Z_u$ for the ideal generated by all unknowns
 $x_{ij}$ with $i \leq u_j$ and $1 \leq j \leq n$.
This is a Borel-fixed prime monomial ideal. 
Consider the intersection of the prime ideals $Z_u$:
\begin{equation*}
Z \,\,:= \,\,\bigcap_{u \in U} Z_u.
\end{equation*}
The monomial ideal $Z$ is a radical and Borel-fixed.
Each of its ${\binom{d+n-2}{d-1}}$ associated
prime ideals $Z_u$ has the same codimension $(n-1)(d-1)$.

We now apply Conca's results in \cite[Section 5]{Conca}.
He showed that $Z$ has the same Hilbert function as $I_2(X)$.
Therefore, the ideal $Z$ is the promised
Borel-fixed ideal in $H_{d,n}$.
More precisely, \cite[Theorem 5.1]{Conca}
states that $Z$ is precisely the
generic initial ideal ${\rm gin}(I_2(X))$ of
the ideal of $2 \times 2$-minors.

We claim that $Z$ is the only
Borel-fixed monomial ideal in $H_{d,n}$.
To show this, we apply results about
the {\em multidegree} in \cite[\S 8.5]{MS}.
The multidegree of the prime ideal $Z_u$ is the monomial 
$\, {\bf t}^u = t_1^{u_1} t_2^{u_2} \cdots t_n^{u_n} $.
By \cite[Theorem 8.44]{MS},
$Z_u$ is the only unmixed Borel-fixed monomial ideal
having multidegree ${\bf t}^u$.
By \cite[Theorem 8.53]{MS}, the Borel-fixed 
ideal $Z = \cap_u Z_u$ has the multidegree
\begin{equation}
\label{eqn:multidegofZ}
 \mathcal{C}\bigl(k[X]/Z;{\bf t}\bigr) 
\quad = \quad
 \sum_{u \in U} {\bf t}^{u}
\quad = \quad
 \sum_{u \in U} t_1^{u_1} t_2^{u_2} \cdots t_n^{u_n} .
\end{equation}
Since the multidegree of a homogeneous ideal is determined by 
its Hilbert series \cite[Claim 8.54]{MS}, we conclude that
every ideal $I \in H_{d,n}$ has multidegree~(\ref{eqn:multidegofZ}).
Now, suppose that $I \in H_{d,n}$ is Borel-fixed.
Since $I$ is monomial, each minimal primary component contributes at most one
term ${\bf t}^{u}$ to the multidegree~(\ref{eqn:multidegofZ}). Thus,
by \cite[Theorem 8.53]{MS},
the minimal primes of $I$ are precisely the prime ideals
$Z_u$ where $u$ runs over the elements of $U$.
This implies $\,I \subseteq \sqrt{I} = Z $. 
However, since  $I$ and $Z$ have
the same Hilbert function in a positive grading, we conclude
that $I = Z$, as desired.
\end{proof}

\begin{remark}
Our proof of Theorem \ref{thm:radical} was based
on an idea that was suggested to us
by Michel Brion. In~\cite{BrionMult}, Brion proves that for any
multiplicity-free subvariety of a flag variety, such as the
diagonal in a product of projective spaces, there exists a flat
degeneration to
a reduced union of Schubert varieties, which is our~$Z$. Although  
 Theorem \ref{thm:radical}
only applies to the special case of the diagonal in a product of
projective spaces, it establishes reducedness not just for
{\em some}
degeneration but for 
{\em any} ideal with the same multigraded Hilbert function.
Our proof
combined the nice argument from~\cite{BrionMult} with the
explicit description of the Borel-fixed monomial ideal given by Conca in~\cite{Conca}. \qed
\end{remark} 

We now come to the question asked by Conca in 
\cite[Conjecture 1.1]{Conca}. Given any $d \times d$-matrices
 $A_1, A_2, \ldots , A_n$ with entries in $K$, we apply them
individually to the $n$ columns of the matrix
$X = (x_{ij})$, form the ideal of $2 \times 2$-minors
of the resulting  $d\times n$-matrix, 
and then take  the initial monomial ideal
\begin{equation}
\label{eqn:concaideal}
\,{\rm in}((A_1,  \ldots, A_n) \circ I_2(X))
\end{equation}
with respect to some term order. Conca conjectures that
(\ref{eqn:concaideal}) is always a squarefree monomial ideal.
He proves this for generic $A_i$ by showing that 
(\ref{eqn:concaideal}) equals the Borel-fixed ideal $Z$
constructed above.  Theorem \ref{thm:radical} implies
the same conclusion under the much weaker hypothesis that the
 $A_i$ are invertible.

\begin{cor} \label{cor:itssquarefree}
For any invertible $d \times d$-matrices $A_1,  \ldots, A_n$ 
and any term order on $K[X]$, the monomial ideal
$\,{\rm in}((A_1,  \ldots, A_n) \circ I_2(X))$ is squarefree.
\end{cor}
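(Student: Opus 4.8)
The plan is to deduce the corollary directly from Theorem~\ref{thm:radical}, once I have recognized the ideal in question as a point of $H_{d,n}$. Write $J = (A_1,\ldots,A_n)\circ I_2(X)$ for the image of the minors ideal under the given tuple of invertible matrices, and let $I = {\rm in}(J)$ be its initial ideal for the chosen term order. Since a monomial ideal is radical if and only if it is squarefree, it suffices to prove that $I$ is a radical ideal; and for that it is enough to exhibit $I$ as a point of $H_{d,n}$ and invoke Theorem~\ref{thm:radical}. So the whole corollary reduces to two bookkeeping checks plus one citation.

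First I would verify that $J$ itself lies in $H_{d,n}$. Each $A_i \in {\rm GL}(d,K)$ acts linearly on the variables $x_{1i},\ldots,x_{di}$ forming the $i$-th column of $X$, all of which carry the common $\Z^n$-degree $e_i$. Hence the action of $(A_1,\ldots,A_n)$ is an automorphism of $K[X]$ that preserves the $\Z^n$-grading and restricts to an invertible linear map on every multigraded component $K[X]_u$. Consequently $J$ is $\Z^n$-homogeneous and $K[X]/J$ has exactly the same multigraded Hilbert function~(\ref{eqn:ourHF}) as $K[X]/I_2(X)$, so $J$ is a point of $H_{d,n}$. (Here invertibility of the $A_i$ is exactly what is needed; a non-invertible tuple could collapse a graded piece and spoil the Hilbert function.)

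Next I would pass to the initial ideal. Since $J$ is $\Z^n$-homogeneous, so is $I = {\rm in}(J)$, and within each multigraded piece $K[X]_u$ the standard monomials (those not in ${\rm in}(J)$) form a $K$-basis both for $(K[X]/J)_u$ and for $(K[X]/I)_u$. Thus $K[X]/I$ and $K[X]/J$ share the same $\Z^n$-graded Hilbert function, and $I$ is a monomial ideal lying in $H_{d,n}$. By Theorem~\ref{thm:radical}, $I$ is radical; being a radical monomial ideal, it is squarefree, which is the claim.

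I expect no serious obstacle: all the substance has been front-loaded into Theorem~\ref{thm:radical}, and what remains is the routine verification that (i) the action by invertible matrices respects the column grading, so $J$ stays in $H_{d,n}$, and (ii) taking initial ideals preserves the multigraded Hilbert function. The only point deserving a moment's care is the equivalence "radical monomial $\Leftrightarrow$ squarefree": this holds because $\sqrt{I}$ of a monomial ideal is the squarefree monomial ideal generated by the radicals of its monomial generators, so $I=\sqrt I$ precisely when those generators are already squarefree.
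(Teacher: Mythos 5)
Your proposal is correct and follows essentially the same route as the paper: recognize that the action of invertible matrices preserves the multigraded Hilbert function so the transformed ideal stays in $H_{d,n}$, note that passing to an initial ideal also preserves that Hilbert function, and then invoke Theorem~\ref{thm:radical} together with the fact that a radical monomial ideal is squarefree. The paper's version is just a terser statement of the same argument (phrased as taking the $G^n$-orbit of $I_2(X)$ in $H_{d,n}$), so the extra bookkeeping you supply is harmless and accurate.
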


\begin{proof}
Applying invertible matrices $A_i$ to $I_2(X)$
corresponds to taking the orbit of $I_2(X)$
under the action of $G^n $ on $H_{d,n}$.
Therefore, (\ref{eqn:concaideal}) is a monomial ideal
that lies in $H_{d,n}$. By Theorem \ref{thm:radical},
it is radical and hence squarefree.
\end{proof}

Corollary~\ref{cor:itssquarefree} implies
\cite[Conjecture 4.2]{Conca}.
At present, we do not know how to prove Conca's stronger conjecture
to the effect that Corollary 
\ref{cor:itssquarefree} holds without the
hypothesis that the matrices $A_i$ are invertible~\cite[Conjecture~1.1]{Conca}.
One idea is 
to extend our study to multigraded Hilbert schemes on $K[X]$
whose defining Hilbert function is bounded above by~(\ref{eqn:ourHF}).

\begin{cor} \label{cor:connected}
The multigraded Hilbert scheme
$H_{d,n}$ is connected.
\end{cor}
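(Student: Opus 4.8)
The plan is to deduce connectedness from the Borel fixed point theorem together with the uniqueness of the Borel-fixed ideal $Z$ that was established inside the proof of Theorem~\ref{thm:radical}. Since connectedness of a $K$-scheme follows once it holds after base change to $\overline{K}$ (the structure map is a surjection onto the original space), I would first replace $K$ by its algebraic closure so that the Borel fixed point theorem is available. The essential structural fact I would then pin down is that $H_{d,n}$ is \emph{projective} over $K$: the $\Z^n$-grading on $K[X]$ is positive, since every unknown has degree in $\N^n \setminus \{0\}$, and for such a grading the multigraded Hilbert scheme arising from the construction of \cite{HS} is complete. This completeness is the one point I would verify most carefully, as the whole argument rests on it.

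Next I would record the relevant group action. The group $G^n$ acts on $H_{d,n}$ by acting column-wise on $K[X]$, an action that preserves the $\Z^n$-grading and hence the Hilbert function~(\ref{eqn:ourHF}); in particular the Borel subgroup $B^n \subseteq G^n$ acts, and $B^n$ is connected and solvable. Because $B^n$ is connected, it cannot permute the connected components of $H_{d,n}$, so each connected component $C$ is $B^n$-invariant. As a closed subscheme of the projective scheme $H_{d,n}$, the component $C$ is itself nonempty and complete, and therefore the Borel fixed point theorem (applied to $C_{\mathrm{red}}$) produces a $B^n$-fixed point lying in $C$.

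Finally I would invoke the input already in hand. Any $B^n$-fixed ideal is in particular fixed by $T^n \subseteq B^n$, hence is a monomial ideal, and Claim~1 in the proof of Theorem~\ref{thm:radical} asserts that there is exactly one such ideal, namely $Z$. Thus every connected component of $H_{d,n}$ contains the single point $Z$. Since distinct connected components are disjoint, there can be only one of them, and $H_{d,n}$ is connected. The only genuinely delicate step is the completeness of $H_{d,n}$; the remainder is a standard application of the Borel fixed point theorem, with the arithmetic content — the uniqueness of the Borel-fixed ideal — supplied by Theorem~\ref{thm:radical}.
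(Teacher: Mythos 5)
Your proof is correct, but it takes a genuinely different route from the paper's. The paper's argument is a one-line constructive one: since $Z = \mathrm{gin}(I)$ for every $I \in H_{d,n}$ (a consequence of Claim~1 in the proof of Theorem~\ref{thm:radical}), each ideal is joined to $Z$ by an explicit connected path --- first a path inside the connected group $G^n$ carrying $I$ to $(A_1,\ldots,A_n)\circ I$, then the one-parameter Gr\"obner degeneration, which defines a morphism from $\mathbb{A}^1$ into $H_{d,n}$ by the universal property of the multigraded Hilbert scheme. This needs no completeness hypothesis and no base change. Your argument instead runs the Borel fixed point theorem on each connected component, which requires the two additional inputs you correctly identify: projectivity of $H_{d,n}$ (which does hold here, by \cite[Corollary~1.2]{HS}, since the grading $\deg(x_{ij}) = e_j$ is positive) and passage to $\overline{K}$ (harmless, since connectedness descends along the surjection $H_{d,n}\times_K \overline{K} \to H_{d,n}$). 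Both proofs ultimately rest on the same arithmetic fact, the uniqueness of the Borel-fixed ideal $Z$; the Gr\"obner-degeneration proof is really a hands-on instance of the same principle, trading the soft completeness argument for an explicit family. What your version buys is robustness: it applies verbatim to any projective scheme with an action of a connected solvable group admitting a unique fixed point, without needing to know that the fixed point is reached by a degeneration of a specific combinatorial form. What it costs is the dependence on completeness, which the paper's path-based argument avoids entirely.
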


\begin{proof}
All ideals in $H_{d,n}$ can be connected 
to their common generic initial ideal~$Z$
by Gr\"obner degenerations.
\end{proof}

In what follows we take a closer look at the combinatorics 
of the ideal~$Z$.

\begin{prop}
The ideal $Z$ is generated by all monomials
$x_{i_1 j_1} x_{i_2 j_2} .... x_{i_k j_k}$
where $1 \leq k - 1 \leq i_1,i_2,\ldots,i_k \leq d - 1$,
      $j_1 < j_2 < \cdots < j_k$,
and   $i_1 + i_2 + \cdots + i_k \leq d(k - 1)$.
The maximum degree of a minimal generator is
${\rm min}(d,n)$.
\end{prop}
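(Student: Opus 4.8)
The plan is to reduce every claim to a single numerical membership test for the monomial ideal $Z=\bigcap_{u\in U}Z_u$ and then read the generators off from it. For a monomial $m$ let $S=\{j : x_{ij}\mid m\text{ for some }i\}$ be the columns in its support and set $a_j=\min\{i:x_{ij}\mid m\}$ for $j\in S$. Since $m\in Z_u$ holds precisely when $a_j\le u_j$ for some $j\in S$, membership depends only on the lower envelope $m'=\prod_{j\in S}x_{a_j,j}$, and $m\in Z\iff m'\in Z$. Now $m\notin Z$ means there is a $u\in U$ with $u_j\le a_j-1$ for all $j\in S$; the largest value of $\sum_j u_j$ allowed by the box constraints $0\le u_j\le d-1$ together with $u_j\le a_j-1$ on $S$ is $\sum_{j\in S}(a_j-1)+(n-|S|)(d-1)$, and since every integer from $0$ to this maximum occurs as such a coordinate sum, a feasible $u$ meeting the exact sum $(n-1)(d-1)$ exists iff the maximum is at least $(n-1)(d-1)$. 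Rearranging gives
\[
m\in Z\iff\sum_{j\in S}a_j\le(|S|-1)\,d .
\]

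Next I would extract the generators. The lower-envelope remark forces any minimal generator $m$ to equal its own $m'$, hence to be squarefree in the columns, $m=x_{i_1 j_1}\cdots x_{i_k j_k}$ with $j_1<\cdots<j_k$, and the criterion reads exactly $i_1+\cdots+i_k\le d(k-1)$. The two flanking bounds come from minimality: if some $i_l=d$, deleting that variable leaves column-sum $\le(k-1)d-d=(k-2)d$, still in $Z$, so $i_l\le d-1$; and since $m/x_{i_r j_r}\notin Z$ forces $\sum_{l\ne r}i_l\ge(k-2)d+1$ for every $r$, fixing $p$, choosing $r\ne p$, and bounding the other $k-2$ entries by $d-1$ gives $i_p\ge(k-2)d+1-(k-2)(d-1)=k-1$. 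Conversely each listed monomial lies in $Z$ by the criterion, so the minimal generators sit inside the listed set, which sits inside $Z$; hence the listed monomials generate $Z$.

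For the degree bound I would combine $k-1\le i_l\le d-1$, which gives $k\le d$, with the distinctness $j_1<\cdots<j_k$, which gives $k\le n$; thus every minimal generator has degree at most $\min(d,n)$. To see this value is attained I would exhibit a minimal generator of degree $k=\min(d,n)$: for $d\le n$ take $k=d$ columns with all $i_l=d-1$, and for $n<d$ take $k=n$ columns with entries in $[n-1,d-1]$ summing to $(n-1)d$, which is possible because $n(n-1)\le(n-1)d\le n(d-1)$. In both cases membership holds and the non-deletion inequalities $\sum_{l\ne r}i_l\ge(k-2)d+1$ hold with equality, so a minimal generator of degree $\min(d,n)$ exists.

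I expect the membership criterion to be the main obstacle. The delicate point is the integer-feasibility argument that the target sum $(n-1)(d-1)$ can be met exactly as soon as the box-constrained maximum reaches it, together with getting the reduction to the lower envelope and the precise constant $(k-1)d$ correct. Once that inequality is established, the remaining steps are routine bookkeeping.
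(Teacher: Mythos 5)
Your proof is correct, and it is genuinely different from the paper's: the paper disposes of this proposition in one line by citing Conca's description of $Z$ in \cite[\S 5]{Conca}, whereas you derive everything directly from the definition $Z=\bigcap_{u\in U}Z_u$. Your membership criterion is the heart of the argument and it checks out: since $m\in Z_u$ iff $a_j\le u_j$ for some column $j$ in the support $S$, the complement condition is a box-constrained integer feasibility problem, and because the achievable coordinate sums over a box $\prod[0,b_j]$ form the full interval $\{0,\dots,\sum b_j\}$, feasibility at the exact level $(n-1)(d-1)$ reduces to the single inequality $\sum_{j\in S}a_j\ge(|S|-1)d+1$; negating gives $m\in Z\iff\sum_{j\in S}a_j\le(|S|-1)d$. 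The lower-envelope reduction correctly forces minimal generators to use distinct columns with one variable each, and your two deletion arguments yield the bounds $i_l\le d-1$ and $i_l\ge k-1$ exactly as stated (with $k\ge 2$ forced automatically, since $k=1$ would require $i_1\le 0$, matching the constraint $1\le k-1$ in the statement). The attainment of degree $\min(d,n)$ is also handled correctly in both regimes. What your route buys is a self-contained, checkable proof in place of an external citation; what it costs is length, and a reader comparing against Conca should note that your listed set is shown to \emph{contain} the minimal generators and be \emph{contained} in $Z$, which suffices for the generation claim even without verifying that every listed monomial is itself minimal.
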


\begin{proof}
This is the description of
the ideal $Z$ given by Conca
\cite[\S 5]{Conca}.
\end{proof}

All ideals $I$ in $H_{d,n}$ share the same Hilbert series 
in the ordinary $\Z$-grading,
\begin{equation*}\sum_{r=0}^\infty {\rm dim}_K (K[X]/I)_r \cdot z^r
\,\,\, = \,\,\,
 \frac{h(z)}{(1-z)^{n+d-1}}  .\end{equation*}
The {\em $h$-polynomial} in the numerator 
can be seen from the ideal of $2 {\times} 2$-minors:
\begin{equation*} h(z) \quad = \quad \sum_{i=0}^{{\rm min}(d-1,n-1)}
 \binom{d-1}{i} \cdot  \binom{n-1}{i} \cdot z^i . \end{equation*}
Note that $\,h(1) = \binom{n+d-2}{d-1} \,$ is the common
scalar degree of the ideals in $H_{d,n}$.

\begin{cor} \label{cor:CM}
Every ideal $I $ in $H_{d,n}$ is Cohen-Macaulay.
\end{cor}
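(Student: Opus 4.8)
The plan is to reduce to the single ideal $Z$, exactly as in the proof of Theorem~\ref{thm:radical}, and then to transport Cohen--Macaulayness through the resulting Gr\"obner degeneration. Fix $I \in H_{d,n}$. First I would apply a generic group element $(A_1,\ldots,A_n) \in G^n$: this is a linear change of coordinates on $K[X]$, hence induces a ring isomorphism that preserves the Cohen--Macaulay property, and by Claim~1 in the proof of Theorem~\ref{thm:radical} the associated generic initial ideal is $Z$. Thus $K[X]/I \cong K[X]/((A_1,\ldots,A_n)\circ I)$, and the latter degenerates flatly to $K[X]/Z$.

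Next I would invoke the standard fact that in a Gr\"obner degeneration the graded Betti numbers can only increase, so that $\,{\rm pd}(K[X]/I) \le {\rm pd}(K[X]/Z)$, equivalently $\,{\rm depth}(K[X]/I) \ge {\rm depth}(K[X]/Z)$ by Auslander--Buchsbaum \cite{Eis}. All ideals in $H_{d,n}$ share the Hilbert series displayed above, so $\,{\rm dim}(K[X]/I) = {\rm dim}(K[X]/Z) = n+d-1$. Hence it suffices to prove the single statement that $Z$ is Cohen--Macaulay: that would force $n+d-1 = {\rm depth}(K[X]/Z) \le {\rm depth}(K[X]/I) \le {\rm dim}(K[X]/I) = n+d-1$, so $K[X]/I$ is Cohen--Macaulay as well.

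The hard part is therefore the Cohen--Macaulayness of $Z$ itself. Here I would use that $Z$ is the unique Borel-fixed ideal in $H_{d,n}$, so it equals the generic initial ideal $\,{\rm gin}(I_2(X))$ for every term order, in particular for the degree reverse lexicographic order. The determinantal ideal $I_2(X)$ defines the cone over the Segre variety $\PP^{d-1} \times \PP^{n-1}$ and is Cohen--Macaulay by Hochster--Eagon \cite{Eis}. The revlex generic initial ideal preserves projective dimension and depth, by the theorem of Bayer--Stillman \cite{Eis}, so $Z$ inherits the depth of $I_2(X)$ and is Cohen--Macaulay.

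I expect this last step to be the only real obstacle; the degeneration argument above is purely formal once it is in hand. Should one wish to avoid the revlex-specific input of Bayer--Stillman, an alternative is to check directly that the squarefree monomial ideal $Z = \bigcap_{u \in U} Z_u$ is the Stanley--Reisner ideal of a shellable, indeed vertex-decomposable, simplicial complex; this would re-establish the Cohen--Macaulayness of $Z$ combinatorially and, together with the degeneration argument, prove the corollary.
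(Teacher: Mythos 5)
Your reduction to the single ideal $Z$ is correct and is exactly how the paper begins: depth can only drop when passing to an initial ideal, dimension is controlled by the common Hilbert series, so Cohen--Macaulayness of $Z$ implies it for every $I \in H_{d,n}$. The problem is with your main route to the Cohen--Macaulayness of $Z$ itself. The Bayer--Stillman theorem you invoke concerns $\operatorname{gin}_{\mathrm{revlex}}$ taken after a \emph{generic change of all $dn$ coordinates}, i.e.\ genericity in $GL(dn)$ with the standard $\Z$-grading. The ideal $Z$ is the \emph{multigraded} gin, obtained by acting only by $GL(d)^n$ (block-diagonal, column-by-column changes of coordinates) so as to preserve the $\Z^n$-grading; it is Borel-fixed only for $B^n$, not for the full Borel subgroup of $GL(dn)$, and it is a different ideal from the ordinary revlex gin of $I_2(X)$. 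The depth-preservation statement is not available for this restricted gin without further argument (the Bayer--Stillman proof uses that the last variable is a generic linear form in \emph{all} the variables, which fails here), so the step ``$Z$ inherits the depth of $I_2(X)$'' is a genuine gap.

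Your fallback --- that $Z = \bigcap_{u \in U} Z_u$ is the Stanley--Reisner ideal of a shellable complex --- is precisely what the paper proves, but you have only asserted it, and that verification is the entire content of the paper's argument. Concretely, the paper orders the facets $F_u = \{x_{ij} : i > u_j\}$ of $\Delta_Z$ lexicographically in $u$, exhibits the restriction set $\eta_u = \{x_{ij} : j > 1,\ i = u_j + 1 < d\}$, and checks both directions of the shelling condition (using the facet $F_{u + e_j - e_1}$ as the earlier witness). Until you supply that check, or an equivalent one such as vertex-decomposability, the proof of the corollary is incomplete.
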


\begin{proof}
Since $Z$ is the common generic initial ideal of 
all ideals $I$, it suffices to show that
the Borel-fixed ideal  $Z$ is Cohen-Macaulay.
Let $\Delta_Z$ denote the $(n+d-2)$-dimensional 
 simplicial complex corresponding to $Z$. The 
vertices of $\Delta_Z$ are the
$dn$ matrix entries $x_{ij}$,
and its facets are the $\binom{n+d-2}{d-1}$ 
sets $\, F_u \, = \,\{ x_{ij} : i > u_j \}\,$
which are complementary to the prime ideals $Z_u$.
We order the facets $F_u$ according to the 
lexicographic order on the vectors $u$.

We claim that this ordering of the facets is a 
{\em shelling} of $\Delta_Z$.  Since the Stanley-Reisner
ring of a shellable simplicial complex is 
Cohen-Macaulay \cite[Theorem~III.2.5]{Stanley}, this will imply 
Corollary \ref{cor:CM}. To verify the shelling property
we must show that every facet $F_u$ has a unique
subset $\eta_u$ such that the faces of $F_u$ 
not containing $\eta_u$ are exactly those appearing
as a face of an earlier facet.
If this condition holds then the $h$-polynomial
can be read off from the shelling as follows:
\begin{equation*}
h(z) \,\,\, = \,\,\, 
\sum_{u \in U} z^{\# \eta_u}.
\end{equation*}
The unique subset of the facet $F_u$ 
with these desired properties equals
\begin{equation*}
\eta_u \,\, = \,\,
\{\, x_{ij} \,:\,  j > 1 \,\,\hbox{and} \,\ i = u_j+1 < d \,\}. \end{equation*}
Indeed, suppose $G$ is a face common to $F_u$ and $F_{u'}$ for some $u' <
u$. Then $u_j' > u_j$ for some $j > 1$, so $G$ does not contain $x_{u_j+1,j}
\in \eta_u$. Conversely, suppose that $G$ 
is a face of $F_u$ which does not contain
$\eta_u$, and let $x_{u_j+1,j}$
be any element of $\eta_u \backslash G$. 
Since $j > 1$,
\begin{equation*}
F_{u+e_j-e_1} \, = \, F_u \backslash \{x_{u_j+1,j}\}
\cup \{ x_{u_1,1} \} \end{equation*}
is a facet of $\Delta_Z$ which contains $G$
and which comes earlier in our ordering.
\end{proof}

\begin{remark}
The shellability of $\Delta_Z$ was mentioned
in \cite[Remark 5.12]{Conca} but no details
were given there. It would be interesting to
know whether the simplicial complex $\Delta_I$ of 
every monomial ideal $I$ in $H_{d,n}$ is shellable.
\end{remark}

\section{Group completions}

In this section we relate our multigraded Hilbert scheme
to classical constructions in algebraic geometry.
For $n=2$ we recover the space of complete collineations
and its GIT construction due to Thaddeus in~\cite{Th}.
Brion~\cite{Brion} extended Thaddeus' work
to the diagonal $X \hookrightarrow X \times X$ of any
rational projective homogeneous variety
$X$.
While the present study is restricted to the case
$X = \PP^{d-1}$, we believe that many of our results will extend
to $X \hookrightarrow X^n$ in Brion's setting.

\begin{prop}
\label{prop:grothen}
There is a injective morphism from the multigraded Hilbert scheme~$H_{d,n}$ to a
connected component of the Grothendieck Hilbert scheme of subschemes
of~$(\PP^{d-1})^n$.
\end{prop}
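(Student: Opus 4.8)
The plan is to produce the morphism at the level of functors of points, using the identification of $K[X]$ with the Cox ring (total coordinate ring) of $Y := (\PP^{d-1})^n$, whose Picard group is $\Z^n$ with $\deg(x_{ij}) = e_j$. Given a $K$-scheme $T = \operatorname{Spec}(R)$ and an $R$-point of $H_{d,n}$, namely a $\Z^n$-homogeneous ideal $I \subseteq R[X]$ with $(R[X]/I)_u$ locally free of the rank prescribed by~(\ref{eqn:ourHF}), I would apply the exact sheafification functor $M \mapsto \widetilde M$ from $\Z^n$-graded $R[X]$-modules to quasicoherent sheaves on $Y \times T$ (for a product of projective spaces this is the factorwise $\operatorname{Proj}$ construction). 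The ideal sheaf $\widetilde I$ cuts out a closed subscheme $\mathcal Z_I \subseteq Y \times T$, and $I \mapsto \mathcal Z_I$ is the candidate natural transformation. Evaluating~(\ref{eqn:ourHF}) along the diagonal $u = (t,\dots,t)$ gives $\binom{nt+d-1}{d-1}$, so every fiber has one and the same Hilbert polynomial $P$ with respect to the Segre polarization $\mathcal O(1,\dots,1)$; hence the image lands in the projective piece $\operatorname{Hilb}^P(Y)$ of the Grothendieck Hilbert scheme. Since $H_{d,n}$ is connected by Corollary~\ref{cor:connected}, its image is connected and therefore lies in a single connected component.

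To see that $\mathcal Z_I$ is a genuine $T$-point of $\operatorname{Hilb}^P(Y)$ I would verify flatness of $\mathcal Z_I$ over $T$. Writing $\pi : Y\times T \to T$, the twisted pushforward $\pi_*(\mathcal O_{\mathcal Z_I}(u))$ agrees with the locally free module $(R[X]/I)_u$ for $u \gg 0$, because saturation does not alter high multidegrees; the standard criterion that a coherent sheaf on a projective morphism is flat once its twisted pushforwards are locally free in all large degrees then gives flatness. Naturality in $R$—hence that $I \mapsto \mathcal Z_I$ is a morphism of schemes and not merely a map on closed points—follows because both the formation of $(R[X]/I)_u$ and sheafification commute with base change, which is exactly what local freeness of $(R[X]/I)_u$ in every degree guarantees. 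I expect this part to be routine.

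The substantive point, and the main obstacle, is injectivity. Two homogeneous ideals define the same subscheme of $Y$ precisely when they have the same saturation with respect to the irrelevant ideal $B = \bigcap_{j=1}^n \mathfrak m_j$, where $\mathfrak m_j = (x_{1j},\dots,x_{dj})$, and the family $\mathcal Z_I$ recovers this saturation through $(I^{\mathrm{sat}})_u = \ker\bigl(S_u \to H^0(\mathcal Z_I, \mathcal O(u))\bigr)$. Thus the morphism is a monomorphism exactly when \emph{every} ideal $I \in H_{d,n}$ is saturated, i.e.\ $H^0_B(K[X]/I) = 0$, i.e.\ no associated prime of $I$ contains a full column $\mathfrak m_j$. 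I would settle this as follows. By Corollary~\ref{cor:CM} every $K[X]/I$ is Cohen--Macaulay, hence equidimensional with no embedded primes, so all associated primes are minimal of codimension $(n-1)(d-1)$. For $n=2$ a column prime has codimension $d > (n-1)(d-1)$ and so cannot occur. For the general case I would invoke the multidegree: by~(\ref{eqn:multidegofZ}) the multidegree of $K[X]/I$ equals $\sum_{u \in U}\mathbf{t}^u$, in which every exponent satisfies $u_j \le d-1$, whereas a prime $\mathfrak p \supseteq \mathfrak m_j$ has $K[X]/\mathfrak p$ a quotient of the polynomial ring in the remaining columns, forcing its multidegree to be $t_j^{\,d}$ times a monomial in the other $t_k$—in particular every monomial of $\mathcal C(K[X]/\mathfrak p;\mathbf{t})$ has $t_j$-exponent at least $d$. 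By the positivity of the multidegree decomposition over minimal primes in \cite[Theorem~8.53]{MS} there is no cancellation, so such a prime could not be a minimal prime of $K[X]/I$ without introducing a monomial of $t_j$-exponent $\ge d$ into $\sum_{u\in U}\mathbf{t}^u$, a contradiction. Hence every $I$ is saturated, $I$ is recovered functorially from $\mathcal Z_I$, and the morphism is injective.
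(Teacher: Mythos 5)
Your argument is correct, and its overall skeleton matches the paper's: both produce the morphism of \cite[\S 4]{HS} (which you reconstruct by hand via sheafification and the local-freeness criterion for flatness, where the paper simply cites it), and both reduce injectivity to showing that every $I\in H_{d,n}$ is saturated, equivalently that no associated prime of $I$ contains a column ideal $\mathfrak{m}_j=\langle x_{1j},\dots,x_{dj}\rangle$, with Corollary~\ref{cor:CM} supplying unmixedness so that only minimal primes of codimension $(n-1)(d-1)$ need be excluded. Where you genuinely diverge is in how you exclude them: the paper observes directly from the Hilbert function that $I+\mathfrak{m}_j$ has affine dimension $d+n-2$, one less than the pure dimension $d+n-1$ of $K[X]/I$, so no component of $V(I)$ can lie in $V(\mathfrak{m}_j)$; you instead reuse the multidegree machinery of Section~2, noting that a top-dimensional component containing $\mathfrak{m}_j$ would, by the positivity in \cite[Theorem~8.53]{MS}, contribute a monomial with $t_j$-exponent at least $d$ to the multidegree~(\ref{eqn:multidegofZ}), whose exponents are all at most $d-1$. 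Both routes are sound and rest on the same purity input; the paper's is a one-line dimension count, while yours is slightly longer but recycles a computation already made, and it makes explicit that radicality (Theorem~\ref{thm:radical}, which the paper cites here) is not actually needed for injectivity --- saturatedness is the relevant property. You also spell out two points the paper leaves implicit: that all images share the Segre Hilbert polynomial $\binom{nt+d-1}{d-1}$, and that connectedness of the image follows from Corollary~\ref{cor:connected}, so the image indeed lies in a single connected component of the Grothendieck Hilbert scheme.
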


\begin{proof}
By~\cite[\S 4]{HS}, there is a natural morphism from the multigraded Hilbert
scheme $H_{d,n}$ to the Grothendieck Hilbert scheme.
Theorem~\ref{thm:radical} shows that every point in $H_{d,n}$ corresponds to a
radical ideal~$I \subset K[X]$. Furthermore, the Hilbert function tells us that
the ideal $I + \langle x_{1j}, \ldots, x_{dj} \rangle$ has (affine) dimension $d+n-2$,
but by Corollary~\ref{cor:CM}, $I$ is pure of dimension $d+n-1$, so no
associated prime of $I$ contains $\langle x_{1j}, \ldots, x_{dj}\rangle$. Thus,
$I$ is uniquely determined by the subscheme it defines in $(\PP^{d-1})^n$, so
the morphism is injective.
\end{proof}

\begin{remark}
We do not know whether the morphism in Proposition~\ref{prop:grothen} is an
immersion, nor whether it is always surjective.
\end{remark}

Recall that the $G^n$-action on $H_{d,n}$ transforms ideals as  follows:
\begin{equation}
\label{eqn:action}
 I \,\,\mapsto \,\, (A_1,A_2,\cdots,A_n) \circ I .
\end{equation}
If $A_1 = A_2 = \cdots = A_n$ then the ideal $I$ is  left invariant,
so the stabilizer of any point $I \in H_{d,n}$ contains the
diagonal subgroup $\,G \,=\, \{(A,A,\ldots,A) \}\,$ of $G^n$.
Moreover, the stabilizer of the determinantal ideal $I_2(X)$
is precisely the diagonal subgroup $G$.
%We therefore obtain an effective action of
%$G^{n-1}$ on $H_{d,n}$, e.g.~by replacing the last matrix
%$A_n$ with the $d \times d$-identity matrix:
%\begin{equation*} I \,\mapsto\, (A_1,\ldots,A_{n-1}, {\rm id}) \circ I .
%\end{equation*}
We write $\,\overline{G^n/G}\,$ for the closure
of $G^{n} \circ I_2(X)$ in the Hilbert scheme
$H_{d,n}$.

\begin{thm}
The subscheme $\overline{G^{n}/G}$ is an irreducible
component of $H_{d,n}$. It is a compactification of the homogeneous space
$G^{n}/G$, so it has dimension $ (d^2-1)(n-1)$.
\end{thm}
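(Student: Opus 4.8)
The claim has two parts: first, that $\overline{G^n/G}$ is an irreducible \emph{component} (not merely a closed subvariety) of $H_{d,n}$, and second, that its dimension equals $(d^2-1)(n-1)$. The plan is to establish the dimension count first, since it essentially forces the component statement. The homogeneous space $G^n/G$ has dimension $n \dim G - \dim G = (n-1)\dim G = (n-1)(d^2-1)$, because $G = {\rm PGL}(d)$ has dimension $d^2-1$ and the diagonal embedding $G \hookrightarrow G^n$ is a closed embedding of a group of that dimension. Since the stabilizer of $I_2(X)$ under the $G^n$-action~(\ref{eqn:action}) is exactly the diagonal $G$, as noted just before the statement, the orbit $G^n \circ I_2(X)$ is isomorphic to $G^n/G$ as a variety, so it is a locally closed subvariety of $H_{d,n}$ of dimension $(d^2-1)(n-1)$. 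Its closure $\overline{G^n/G}$ is then an irreducible projective variety of the same dimension, and it is a compactification of $G^n/G$ by construction.

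The remaining and essential point is that this closure is a full irreducible component, i.e.\ that it is not properly contained in any larger irreducible subvariety of $H_{d,n}$. The cleanest route is to show that $I_2(X)$ is a \emph{smooth} point of $H_{d,n}$ and that the unique component of $H_{d,n}$ passing through it has dimension exactly $(d^2-1)(n-1)$; then $\overline{G^n/G}$, being an irreducible subvariety of that dimension through $I_2(X)$, must coincide with that component. Smoothness at $I_2(X)$ together with the dimension equality is precisely what the subsequent Section~3 results (that $I_2(X)$ is a smooth point) are designed to supply, so I would invoke them here. Concretely, the tangent space to $H_{d,n}$ at the ideal $I$ is computed, following~\cite{HS}, as the degree-zero part of $\mathrm{Hom}_{K[X]}(I, K[X]/I)$ in the $\Z^n$-grading, and the task is to verify that this tangent space has dimension $(d^2-1)(n-1)$ at $I = I_2(X)$.

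The main obstacle is this tangent-space dimension computation. One always has $\dim T_{I_2(X)} H_{d,n} \geq \dim_{I_2(X)} \overline{G^n/G} = (d^2-1)(n-1)$, since the orbit sits inside the Hilbert scheme; the content is the reverse inequality. I would compute $\mathrm{Hom}(I_2(X), K[X]/I_2(X))_0$ directly: a homomorphism is determined by where it sends the $2{\times}2$ minors, subject to the Plücker-type relations among them, and one checks that the graded piece of the appropriate degree has exactly the expected dimension. A useful consistency check is that the infinitesimal deformations coming from the group action already span a space of dimension $(d^2-1)(n-1)$, so it suffices to prove that \emph{every} first-order deformation is induced by $G^n$; equivalently, that the normal space to the orbit inside the tangent space to the Hilbert scheme vanishes. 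Once the tangent space dimension is pinned at $(d^2-1)(n-1)$, smoothness of $I_2(X)$ and equality of dimensions together yield that $\overline{G^n/G}$ is an irreducible component, completing the proof.
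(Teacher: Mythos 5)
Your overall architecture matches the paper's: compute $\dim G^n/G=(d^2-1)(n-1)$, note the orbit of $I_2(X)$ is locally closed of that dimension, and then force the component statement by showing the tangent space of $H_{d,n}$ at $I_2(X)$ has dimension exactly $(d^2-1)(n-1)$, so that $I_2(X)$ is a smooth point lying on a unique component which must coincide with $\overline{G^n/G}$. That framing, and the observation that only the upper bound on the tangent space is in question, are both correct.

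The gap is that the one step carrying all the content --- the upper bound $\dim\mathrm{Hom}_{K[X]}(I_2(X),K[X]/I_2(X))_0\le (d^2-1)(n-1)$ --- is not proved; you say one ``checks'' that the relevant graded piece has the expected dimension, but that check is precisely the hard part, and doing it directly at $I_2(X)$ is the least tractable place to do it. The paper instead uses upper semicontinuity of tangent space dimension under Gr\"obner degeneration: it suffices to exhibit a single initial monomial ideal of $I_2(X)$ whose tangent space has dimension $(d^2-1)(n-1)$, and Lemma~\ref{lem:diag} carries out that computation for the lexicographic initial ideal (the chain ideal $M$), where the combinatorics of monomial generators and syzygies makes an explicit basis $\rho,\sigma,\tau$ of the tangent space accessible. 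That degeneration trick is the missing idea in your write-up. Two further cautions: (i) your suggestion to ``invoke the subsequent Section~3 results that $I_2(X)$ is a smooth point'' is circular, since that smoothness is established only by this theorem and its supporting lemma; (ii) if you instead tried to get the bound from the normal bundle of the diagonal (which is $T_{\PP^{d-1}}^{\oplus(n-1)}$, with $h^0=(n-1)(d^2-1)$ and $h^1=0$), you would be computing the tangent space to the Grothendieck Hilbert scheme, and transferring that to the multigraded Hilbert scheme requires knowing that the morphism of Proposition~\ref{prop:grothen} is an immersion near $I_2(X)$ --- a point the paper explicitly leaves open, which is why it gives the combinatorial proof at the chain ideal instead.
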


This theorem can be deduced from Proposition \ref{prop:grothen}
using standard algebraic geometry arguments concerning the
tangent sheaf of Grothendieck's Hilbert scheme. What we present below
is a more detailed combinatorial proof based on  the identification
of an explicit smooth point in Lemma  \ref{lem:diag}.

\begin{proof}
Clearly, the dimension of the tangent space at $I_2(X)$ is
at least $(d^2-1)(n-1)$, the dimension of $G^{n}/G$.
By semi-continuity, it is bounded above by the tangent 
space dimension of $H_{d,n}$ at any initial monomial ideal
${\rm in}(I_2(X))$. In Lemma \ref{lem:diag} below, we identify a particular
initial ideal for which this dimension equals $(d^2-1)(n-1)$.
From this we conclude that the tangent space of $H_{d,n}$ at $I_2(X)$
has dimension $\,(d^2-1)(n-1)$. This is precisely the dimension of
the orbit closure $\overline{G^{n}/G}$ of $I_2(X)$. We also conclude
that $I_2(X)$ is a smooth point of $H_{d,n}$, and that the 
unique irreducible component of
$H_{d,n}$ containing $I_2(X)$ is the compactified space $\overline{G^{n}/G}$.
\end{proof}

Let $M$ denote the ideal generated by the
quadratic monomials $x_{ik} x_{jl}$ for all
$1 \leq i < j \leq d$ and $1 \leq k < l \leq n$.
We call $M$ the {\em chain ideal}, because
its irreducible components correspond to chains
in the grid from $x_{1n}$ to $x_{d1}$. It is
a point  in  $\,\overline{G^{n}/G} \subset H_{d,n}\,$ 
since $M = {\rm in}(I_2(X))$ in the lexicographic order.

\begin{lem} \label{lem:diag}
The tangent space of the multigraded Hilbert scheme 
$H_{d,n}$ at the chain ideal $M$ has dimension  $\,(d^2-1)(n-1)$.
\end{lem}

\begin{proof}
We claim that the following three classes
$\rho$,  $\sigma$ and~$\tau$ form a basis for the tangent space
${\rm Hom}_{K[X]} (M,K[X]/M)_0 $:

\smallskip

\noindent {\em Class $\rho$}:
For each triple of indices $(i,j,l)$ 
with $1 \leq i < j \leq d$ and $1 < l \leq n$ 
we define a $K[X]$-module homomorphism
 $\,\rho_{ijl}\colon M \rightarrow K[X]/M\,$ by setting
\begin{align*}
\rho_{ijl} ( x_{hk} x_{jl}) &= x_{hk} x_{il} \quad
\hbox{whenever} \,\, i \leq h < j \,\,\hbox{and} \,\,k < l\hbox{, and} \\
\rho_{ijl}(m) &= 0 \quad
\hbox{for all other minimal generators $m$ of $M$}.
\end{align*}

\smallskip

\noindent {\em Class $\sigma$}:
For each triple of indices $(i,j,k)$ 
with $1 \leq i < j \leq d$ and $1 \leq k < n$ 
we define a $K[X]$-module homomorphism
 $\,\sigma_{ijk}\colon M \rightarrow K[X]/M\,$ by setting
\begin{align*}
\sigma_{ijk} ( x_{ik} x_{hl})  &=  x_{jk} x_{hl} \quad
\hbox{whenever} \,\, i < h \leq j \,\,\hbox{and} \,\,k < l\hbox{, and} \\
\sigma_{ijl}(m) &= 0  \quad
\hbox{for all other minimal generators $m$ of $M$}.
\end{align*}

\noindent {\em Class $\tau$}:
For each pair of indices $(i,k)$ 
with $1 \leq i < d$ and $1 \leq k < n$ 
we define a $K[X]$-module homomorphism
 $\,\tau_{ik}\colon M \rightarrow K[X]/M\,$ by setting
\begin{align*}
\tau_{ik} ( x_{i,k} x_{i+1,k+1}) &= x_{i,k+1} x_{i+1,k} \quad \hbox{and,}\\
\tau_{ik}(m) &=  0 \quad
\hbox{for all other minimal generators $m$ of $M$}.
\end{align*}

The above $K[X]$-linear maps are
$\Z^d$-homogeneous of degree zero, and
they are clearly  linearly independent over $K$.
There are $(n-1)(d-1)$ maps in the class 
$\tau$, and there are $(n-1)\binom{d}{2}$ each in
the classes $\rho$ and $\sigma$.
This adds up to the required total number of
$\,(d^2-1)(n-1) =  (n-1)(d-1)+2(n-1)\binom{d}{2}$.

It remains to be seen that every $\Z^d$-graded 
$K[X]$-module homomorphism
from~$M$  to $K[X]/M$ of degree zero is a $K$-linear
combination of the above. Suppose that
$\phi\colon M \rightarrow K[X]/M$ is a module homomorphism. Then,
 for $i < j$ and
$k < l$, we can uniquely write $\phi(x_{ik} x_{jl})$ as a linear combination
of monomials not in~$M$. Furthermore, by subtracting appropriate multiples of
$\rho_{ijl}$ and $\sigma_{ijk}$, we can assume that the monomials in
the linear combination do
not include
$x_{ik}x_{il}$ or $x_{jk}x_{jl}$. Suppose that for some $n \leq m$, the coefficient of $x_{mk} x_{nl}$ 
is some non-zero $\alpha\in K$. Either $i < m$ or $n < j$, and the two cases are
symmetric under reversing the order of both the column indices and the row indices, so we
assume the former.
For any $o$ such that $n \leq o \leq m$ and $i < o$, the syzygies imply
\begin{equation*}
\alpha x_{o,k+1}x_{mk}x_{nl}\,\, + \cdots
=\,\, x_{o,k+1} \phi(x_{ik} x_{jl}) 
\,\,=\,\, x_{jl} \phi(x_{ik} x_{o, k+1}).
\end{equation*}
Since the first term is non-zero in $K[X]/M$, the monomial must be divisible by
$x_{jl}$. Thus, either $j = n$, or both $j=o$
and $l = k+1$. In the first case, taking $o = m$, and using the assumption that
the coefficient of $x_{mk}x_{m,k+1}$ in $\phi(x_{ik}x_{o,k+1})$ is zero, we get
a contradiction.
In the second case, if $j \neq n$, then we must only
have one
choice of $o$ and this forces $i = n = m-1$. Therefore, $\phi$ is a linear
combination of homomorphisms of class $\tau$, and thus the classes of $\rho$,
$\sigma$, and~$\tau$ span the tangent space at $M$.
\end{proof}

\begin{cor}
\label{cor:chain-smooth}
The chain ideal $M$ is a smooth point on $H_{d,n}$.
The unique irreducible component of $H_{d,n}$
containing $M$ is the completion $\overline{G^{n}/G}$.
\end{cor}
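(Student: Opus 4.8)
The plan is to deduce everything from the tangent-space computation in Lemma~\ref{lem:diag} together with a dimension count, exactly as was done for the diagonal ideal $I_2(X)$ in the theorem above. The only genuinely new input is to confirm that $M$ actually lies on the main component $\overline{G^n/G}$; the rest is a formal consequence of the standard fact that a point of a scheme of finite type over a field is smooth precisely when its Zariski tangent space has the same dimension as the local dimension of the scheme there. As at the start of the proof of Theorem~\ref{thm:radical}, I would reduce to an algebraically closed field so that regularity and smoothness coincide.

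First I would record that $M$ is a point of $\overline{G^n/G}$. This is already noted in the text preceding the lemma: since $M = \operatorname{in}(I_2(X))$ in the lexicographic term order, the ideal $M$ arises as the flat limit $\lim_{t\to 0}\lambda(t)\circ I_2(X)$ of a one-parameter Gr\"obner degeneration. The one-parameter subgroup $\lambda$ acts within $G^n$ by rescaling the variables, so this limit lies in the orbit closure $\overline{G^n\circ I_2(X)} = \overline{G^n/G}$. Hence $M\in\overline{G^n/G}$.

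Next comes the dimension comparison. By~\cite[\S 4]{HS} the Zariski tangent space of $H_{d,n}$ at $M$ is $\operatorname{Hom}_{K[X]}(M,K[X]/M)_0$, which Lemma~\ref{lem:diag} shows to have dimension $(d^2-1)(n-1)$. On the other hand, the theorem above identifies $\overline{G^n/G}$ as an irreducible component of $H_{d,n}$ of dimension $(d^2-1)(n-1)$, and we have just seen that it passes through $M$. Writing $\dim_M H_{d,n}$ for the maximal dimension of an irreducible component through $M$, the general inequality between local dimension and embedding dimension gives
\[
(d^2-1)(n-1)\;=\;\dim \overline{G^n/G}\;\le\;\dim_M H_{d,n}\;\le\;\dim_K \operatorname{Hom}_{K[X]}(M,K[X]/M)_0\;=\;(d^2-1)(n-1).
\]
Thus all of these inequalities are equalities, so the local ring $\mathcal{O}_{H_{d,n},M}$ is regular; that is, $M$ is a smooth point of $H_{d,n}$.

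Finally, a regular local ring is an integral domain, so $M$ lies on a unique irreducible component of $H_{d,n}$. Since $\overline{G^n/G}$ is an irreducible component containing $M$, it must be that component, which proves the second assertion. I do not expect any serious obstacle here: the heavy lifting has already been carried out in Lemma~\ref{lem:diag}, and the one point to handle with care is the equivalence \emph{(smooth point)} $\Leftrightarrow$ \emph{(tangent dimension equals local dimension)}, which is valid after the reduction to an algebraically closed field.
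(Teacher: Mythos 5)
Your argument is correct and is essentially the one the paper intends: the corollary is stated without a separate proof precisely because it follows from Lemma~\ref{lem:diag}, the observation that $M=\operatorname{in}(I_2(X))$ lies in $\overline{G^n/G}$, and the same dimension comparison used in the proof of the preceding theorem. Your added care about the tangent space being $\operatorname{Hom}_{K[X]}(M,K[X]/M)_0$, the regular-local-ring-is-a-domain step, and the reduction to an algebraically closed field only makes explicit what the paper leaves implicit.
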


We now turn to the case $n = 2$ which is well-studied in the
literature. The compactification $\overline{G^2/G}$ % = \overline{{\rm PGL}(d)}$
is the classical {\em space of complete collineations}, which was
investigated by Thaddeus in \cite{Th}. In fact, we have:

\begin{cor}
The multigraded Hilbert scheme $H_{d,2}$ is smooth and irreducible.
It coincides with the space of complete collineations:
 $\,H_{d,2} = \overline{G^2/G}$.
\end{cor}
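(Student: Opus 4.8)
The plan is to bootstrap from the single smooth point studied in Lemma~\ref{lem:diag} to the entire scheme, using the torus action, upper semicontinuity of tangent-space dimension, and connectedness. The first step is to reduce everything to the $T^2$-fixed points. These fixed points are precisely the monomial ideals in $H_{d,2}$, and the central claim I would establish is:
\emph{every monomial ideal $I \in H_{d,2}$ satisfies $\dim_K \mathrm{Hom}_{K[X]}(I,K[X]/I)_0 \le d^2-1$.}
This is the direct analogue of Lemma~\ref{lem:diag}, which proves the bound (with equality) only for the chain ideal $M$. For $n=2$ the $T^2$-fixed ideals are indexed by simple lattice configurations in the $d\times 2$ grid, and for each one I would exhibit an explicit spanning family of degree-zero homomorphisms $I \to K[X]/I$, organized into classes analogous to the $\rho$, $\sigma$, $\tau$ of Lemma~\ref{lem:diag}, and count that the family has at most $d^2-1$ members.

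Granting this claim, the conclusion follows cleanly. Every point of $H_{d,2}$ admits a Gr\"obner degeneration to a monomial ideal, exactly as in the proofs of Theorem~\ref{thm:radical} and Corollary~\ref{cor:connected}. Since tangent-space dimension is upper semicontinuous under specialization, and each point $q$ specializes to its monomial initial ideal $p$, we get $\dim_K T_q H_{d,2} \le \dim_K T_p H_{d,2} \le d^2-1$ for every $q$. The theorem above establishing $\overline{G^{2}/G}$ as an irreducible component shows that it has dimension exactly $d^2-1$; hence at any point $q \in \overline{G^{2}/G}$ we have
\[
d^2-1 \;\le\; \dim_q H_{d,2} \;\le\; \dim_K T_q H_{d,2} \;\le\; d^2-1,
\]
so all the inequalities are equalities. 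Thus every point of $\overline{G^{2}/G}$ is a smooth (regular) point of $H_{d,2}$, and a smooth point lies on a unique irreducible component. Therefore $\overline{G^{2}/G}$ is disjoint from every other irreducible component, so it is both closed and open in $H_{d,2}$. As $H_{d,2}$ is connected by Corollary~\ref{cor:connected}, it must equal $\overline{G^{2}/G}$, which is then smooth and irreducible and coincides with Thaddeus' space of complete collineations.

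The main obstacle is the uniform tangent-space bound at an \emph{arbitrary} monomial ideal. Lemma~\ref{lem:diag} carries out this computation only for the chain ideal $M$, and the delicate part there is the final spanning verification; extending that argument so that it holds simultaneously for every $T^2$-fixed staircase, with careful bookkeeping of which homomorphisms survive, is where the genuine work lies. A cleaner alternative worth attempting would be to identify the $T^2$-fixed points of $H_{d,2}$ with those of Thaddeus' smooth compactification and show the induced map is an isomorphism, thereby importing smoothness rather than recomputing it; but the tangent-space route is self-contained given the methods already developed in this section.
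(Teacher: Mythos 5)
Your overall scaffolding (degenerate to a torus-fixed point, bound the tangent space there, invoke upper semicontinuity, then use connectedness from Corollary~\ref{cor:connected} to force irreducibility) is exactly the logic behind the paper's proof. But the entire content of the argument is concentrated in your ``central claim'' that every monomial ideal in $H_{d,2}$ has tangent space of dimension at most $d^2-1$, and you leave that claim unproven, explicitly flagging it as ``where the genuine work lies.'' This is a genuine gap, and moreover it is one that dissolves once you notice the key combinatorial fact the paper uses: for $n=2$ there is, \emph{up to relabeling} (i.e.\ up to permuting the rows within each of the two columns, which is an action by permutation matrices in $G^2$), only \emph{one} monomial ideal in $H_{d,2}$, namely the chain ideal $M$. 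Hence every $T^2$-fixed point lies in the $G^2$-orbit of $M$, and Lemma~\ref{lem:diag} (via Corollary~\ref{cor:chain-smooth}) already gives the tangent-space bound at all of them; there is no family of ``staircases'' to analyze and no new spanning computation to perform. To see the uniqueness, use Theorem~\ref{thm:radical} and the multidegree argument from its proof: a monomial ideal in $H_{d,2}$ is radical with exactly $d$ minimal primes, one coordinate prime of multidegree $t_1^{u_1}t_2^{d-1-u_1}$ for each $u_1=0,\dots,d-1$, and the Hilbert function forces these to interlace into a chain after relabeling.

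A secondary gap: you conclude by asserting that the resulting smooth irreducible scheme ``coincides with Thaddeus' space of complete collineations,'' but this identification is not automatic from smoothness and irreducibility of $H_{d,2}$. The paper derives it from Proposition~\ref{prop:grothen}: the natural morphism to the Grothendieck Hilbert scheme is injective, Thaddeus' results identify the relevant component of the Grothendieck Hilbert scheme with the (smooth, irreducible) space of complete collineations, and an injective morphism between two smooth proper irreducible varieties of the same dimension is then an isomorphism. Your alternative suggestion of matching torus-fixed points with Thaddeus' compactification would still require an argument of this kind, so you should include the appeal to Proposition~\ref{prop:grothen} and \cite{Th} explicitly.
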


\begin{proof}
Up to relabeling, the chain ideal is the
only monomial ideal in $H_{d,2}$. This point is
smooth by Lemma~\ref{cor:chain-smooth}, and hence $H_{d,2}$ is smooth.
Since it is connected by Corollary~\ref{cor:connected},
we conclude that $H_{d,2}$ is also irreducible.
The results in \cite{Th} show that the Grothendieck Hilbert scheme is isomorphic
to the space of complete collineations, and in particular smooth and
irreducible.
Thus, the morphism in Proposition~\ref{prop:grothen} is an isomorphism between
$H_{d,2}$ and the space of complete collineations.
\end{proof}

The representation of $\overline{G^2/G}$ as a multigraded Hilbert scheme
$H_{d,2}$ 
gives rise to nice polynomial
equations for the space of complete collineations.
 Namely, each ideal $I$ in  $ H_{d,2}$ is generated by $\binom{d}{2}$
equations of degree $(1,1)$.
As there are $d^2$ monomials in $K[X]_{(1,1)}$,
this describes an embedding of $H_{d,2}$ into
the Grassmannian ${\rm Gr}\big(\binom{d}{2},d^2\big)$.
The subscheme $H_{d,2}$ of this Grassmannian is cut out
by the determinantal equations which are derived
by requiring that the ideal  $I$  has the correct number
of first syzygies in degrees $(1,2)$ and $(2,1)$.

\begin{ex} [Equations defining $H_{3,2}$] \rm
We shall realize the $8$-dimensional manifold $H_{3,2}$ as a
closed subscheme of the  $18$-dimensional Grassmannian
${\rm Gr}(3,9)$, by giving explicit equations 
in the $84$ Pl\"ucker coordinates. Our equations 
furnish an explicit projective embedding for
Thaddeus' GIT construction \cite{Th} which is
reviewed further below.
Fix a $3 \times 9$-matrix of unknowns
\begin{equation*}
A \quad = \quad \begin{bmatrix}
a_{11} &a_{12} & a_{13} & a_{21} & a_{22} & a_{23} & a_{31} & a_{32} & a_{33}\\
b_{11} &b_{12} & b_{13} & b_{21} & b_{22} & b_{23} & b_{31} & b_{32} & b_{33}\\
c_{11} &c_{12} & c_{13} & c_{21} & c_{22} & c_{23} & c_{31} & c_{32} & c_{33}
\end{bmatrix}.
\end{equation*}
Consider the ideal $I$ generated  by 
the three bilinear polynomials in the vector
\begin{equation*}  A \cdot 
\bigl(
x_{11} x_{12},
x_{11} x_{22},
x_{11} x_{32},
x_{21} x_{12},
x_{21} x_{22},
x_{21} x_{32},
x_{31} x_{12},
x_{31} x_{22},
x_{31} x_{32} \bigr)^T. \end{equation*}
The condition for $I$ to be a point in $H_{3,2}$ is equivalent to
the condition that the rows of the following two $9{\times} 18$-matrices
are linearly dependent:
\begin{equation*} \!\!
\left[\begin{array}{cccccccccccccccccc}
a_{11} \!&\! a_{12} \!&\! a_{13} \!&\! a_{21} \!&\! a_{22} \!&\! a_{23} \!&\! a_{31} \!&\! a_{32} 
\!&\! a_{33} \!&\!  0 \!&\!  0 \!&\!  0 \!&\!  0 \!&\!  0 \!&\!  0 \!&\!  0 \!&\!  0 \!&\! 0 \! \\ \!
b_{11} \!&\! b_{12} \!&\! b_{13} \!&\! b_{21} \!&\! b_{22} \!&\! b_{23} \!&\! b_{31} \!&\! b_{32}
 \!&\! b_{33} \!&\!  0 \!&\!  0 \!&\!  0 \!&\!  0 \!&\!  0 \!&\!  0 \!&\!  0 \!&\!  0 \!&\! 0 \! \\ \!
c_{11} \!&\! c_{12} \!&\! c_{13} \!&\! c_{21} \!&\! c_{22} \!&\! c_{23} \!&\! c_{31} \!&\! c_{32} 
\!&\! c_{33} \!&\!  0 \!&\!  0 \!&\!  0 \!&\!  0 \!&\!  0 \!&\!  0 \!&\!  0 \!&\!  0 \!&\! 0 \! \\ \!
 0 \!&\!  0 \!&\!  0 \!&\! a_{11} \!&\! a_{12} \!&\! a_{13} \!&\!  0 \!&\!  0 \!&\!  0 \!&\! a_{21} 
\!&\! a_{22} \!&\! a_{23} \!&\! a_{31} \!&\! a_{32} \!&\! a_{33} \!&\!  0 \!&\!  0 \!&\! 0 \! \\ \!
 0 \!&\!  0 \!&\!  0 \!&\! b_{11} \!&\! b_{12} \!&\! b_{13} \!&\!  0 \!&\!  0 \!&\!  0 \!&\! b_{21} 
\!&\! b_{22} \!&\! b_{23} \!&\! b_{31} \!&\! b_{32} \!&\! b_{33} \!&\!  0 \!&\!  0 \!&\! 0 \! \\ \!
 0 \!&\!  0 \!&\!  0 \!&\! c_{11} \!&\! c_{12} \!&\! c_{13} \!&\!  0 \!&\!  0 \!&\!  0 \!&\! c_{21} 
\!&\! c_{22} \!&\! c_{23} \!&\! c_{31} \!&\! c_{32} \!&\! c_{33} \!&\!  0 \!&\!  0 \!&\! 0 \! \\ \!
 0 \!&\!  0 \!&\!  0 \!&\!  0 \!&\!  0 \!&\!  0 \!&\! a_{11} \!&\! a_{12} \!&\! a_{13} \!&\!  0 \!&\!  0
 \!&\!  0 \!&\! a_{21} \!&\! a_{22} \!&\! a_{23} \!&\! a_{31} \!&\! a_{32} \!&\! a_{33} \! \\ \!
 0 \!&\!  0 \!&\!  0 \!&\!  0 \!&\!  0 \!&\!  0 \!&\! b_{11} \!&\! b_{12} \!&\! b_{13} \!&\!  0 \!&\!  0 
\!&\!  0 \!&\! b_{21} \!&\! b_{22} \!&\! b_{23} \!&\! b_{31} \!&\! b_{32} \!&\! b_{33} \! \\ \!
 0 \!&\!  0 \!&\!  0 \!&\!  0 \!&\!  0 \!&\!  0 \!&\! c_{11} \!&\! c_{12} \!&\! c_{13} \!&\!  0 \!&\!  0 
\!&\!  0 \!&\! c_{21} \!&\! c_{22} \!&\! c_{23} \!&\! c_{31} \!&\! c_{32} \!&\! c_{33} 
\end{array}\right]
\end{equation*}
\begin{equation*} \!\!
\left[\begin{array}{cccccccccccccccccc}
a_{11} \!&\! a_{21} \!&\! a_{31} \!&\! a_{12} \!&\! a_{22} \!&\! a_{32} \!&\! a_{13} \!&\! a_{23} \!&\! a_{33} \!&\!  0 \!&\!  0 \!&\!  0 \!&\!  0 \!&\!  0 \!&\!  0 \!&\!  0 \!&\!  0 \!&\! 0 \! \\ \!
b_{11} \!&\! b_{21} \!&\! b_{31} \!&\! b_{12} \!&\! b_{22} \!&\! b_{32} \!&\! b_{13} \!&\! b_{23} \!&\! b_{33} \!&\!  0 \!&\!  0 \!&\!  0 \!&\!  0 \!&\!  0 \!&\!  0 \!&\!  0 \!&\!  0 \!&\! 0 \! \\ \!
c_{11} \!&\! c_{21} \!&\! c_{31} \!&\! c_{12} \!&\! c_{22} \!&\! c_{32} \!&\! c_{13} \!&\! c_{23} \!&\! c_{33} \!&\!  0 \!&\!  0 \!&\!  0 \!&\!  0 \!&\!  0 \!&\!  0 \!&\!  0 \!&\!  0 \!&\! 0 \! \\ \!
 0 \!&\!  0 \!&\!  0 \!&\! a_{11} \!&\! a_{21} \!&\! a_{31} \!&\!  0 \!&\!  0 \!&\!  0 \!&\! a_{12} \!&\! a_{22} \!&\! a_{32} \!&\! a_{13} \!&\! a_{23} \!&\! a_{33} \!&\!  0 \!&\!  0 \!&\! 0 \! \\ \!
 0 \!&\!  0 \!&\!  0 \!&\! b_{11} \!&\! b_{21} \!&\! b_{31} \!&\!  0 \!&\!  0 \!&\!  0 \!&\! b_{12} \!&\! b_{22} \!&\! b_{32} \!&\! b_{13} \!&\! b_{23} \!&\! b_{33} \!&\!  0 \!&\!  0 \!&\! 0 \! \\ \!
 0 \!&\!  0 \!&\!  0 \!&\! c_{11} \!&\! c_{21} \!&\! c_{31} \!&\!  0 \!&\!  0 \!&\!  0 \!&\! c_{12} \!&\! c_{22} \!&\! c_{32} \!&\! c_{13} \!&\! c_{23} \!&\! c_{33} \!&\!  0 \!&\!  0 \!&\! 0 \! \\ \!
 0 \!&\!  0 \!&\!  0 \!&\!  0 \!&\!  0 \!&\!  0 \!&\! a_{11} \!&\! a_{21} \!&\! a_{31} \!&\!  0 \!&\!  0 \!&\!  0 \!&\! a_{12} \!&\! a_{22} \!&\! a_{32} \!&\! a_{13} \!&\! a_{23} \!&\! a_{33} \! \\ \!
 0 \!&\!  0 \!&\!  0 \!&\!  0 \!&\!  0 \!&\!  0 \!&\! b_{11} \!&\! b_{21} \!&\! b_{31} \!&\!  0 \!&\!  0 \!&\!  0 \!&\! b_{12} \!&\! b_{22} \!&\! b_{32} \!&\! b_{13} \!&\! b_{23} \!&\! b_{33} \! \\ \!
 0 \!&\!  0 \!&\!  0 \!&\!  0 \!&\!  0 \!&\!  0 \!&\! c_{11} \!&\! c_{21} \!&\! c_{31} \!&\!  0 \!&\!  0 \!&\!  0 \!&\! c_{12} \!&\! c_{22} \!&\! c_{32} \!&\! c_{13} \!&\! c_{23} \!&\! c_{33} \! 
\end{array}\right]
\end{equation*}
These two matrices are obtained by multiplying the generators of $I$ 
with the 
entries in the two columns of $X = (x_{ij}) $ respectively. This results in
nine polynomials of bidegree $(2,1)$ and nine polynomials of bidegree $(1,2)$,
each having $18$ terms.
These two sets of polynomials must be linearly
dependent because each $I \in H_{3,2}$ 
has its first syzygies in these two bidegrees.

The $84$ maximal minors of the matrix $A$ are the
Pl\"ucker coordinates $p_{i_1 i_2, j_1 j_2, k_1 k_2}$
on the Grassmannian ${\rm Gr}(3,9)$, where
the indices run from~$1$ to~$3$.
Using Laplace expansion, we write each $9 {\times} 9$-minor of the 
two matrices as a cubic polynomial in these Pl\"ucker coordinates.
The condition that the matrices have rank at most eight translates into a
system of homogeneous cubic polynomials in the $84$ unknowns
$p_{i_1 i_2,j_1 j_2, k_1 k_2}$, and these
cubics define the space of complete collineations,
$\overline{G^2/G} = H_{3,2}$, as a subscheme of ${\rm Gr}(3,9)$.

Thaddeus \cite{Th} realizes 
$H_{3,2}$ as the (Chow or GIT) 
quotient of the Grassmannian ${\rm Gr}(3,6)$ by the one-dimensional subtorus of
$(K^*)^6$ given by the diagonal matrices with entries 
$(\,t,\,t,\,t,t^{-1},t^{-1},t^{-1})$.
We can see this in  our equations as follows. Let $U = (u_{ij})$ and
$V = (v_{ij})$ be $3 {\times} 3$-matrices of unknowns. Each point in 
${\rm Gr}(3,6)$ is 
represented as the row space of the $3 {\times} 6$-matrix $[U,V]$.
The group $G^2 = {\rm PGL}(3) \times {\rm PGL}(3)$ acts on 
$H_{3,2}$ by translating the
distinguished point 
$I_2(X)$ to the ideal generated by the three quadrics
\begin{equation*} \begin{matrix}
& (u_{i1} x_{11} + u_{i2} x_{21} + u_{i3} x_{31}) 
(v_{j1} x_{12} + v_{j2} x_{22} + v_{j3} x_{32}) \\
 - &
(u_{j1} x_{11} + u_{j2} x_{21} + u_{j3} x_{31}) 
(v_{i1} x_{12} + v_{i2} x_{22} + v_{i3} x_{32})
\end{matrix}
\qquad \hbox{for} \,\,1 \leq i < j \leq 3 . \end{equation*}
The entries of the corresponding $3 \times 9$ matrix $A$ are
\begin{equation*}
  a_{i_1 i_2} =  u_{1 i_1} \! v_{2 i_2} \!-\! u_{2 i_1} \! v_{1 i_2} ,\,\,
  b_{j_1 j_2} =  u_{1 j_1} \! v_{3 i_2} \!-\! u_{3 j_1} \! v_{1 j_2} ,\,\,
  c_{k_1 k_2} =  u_{2 k_1} \! v_{3 i_2} \!-\! u_{3 k_1} \! v_{2 k_2} .
\end{equation*}
Writing $u_{\mu}$ for the $\mu$-th column of the matrix $U$
and $v_{\nu}$ for the $\nu$-th column of $V$, this
translates into the following parametric representation
of $H_{3,2}$:
\begin{equation*}
p_{i_1 i_2, j_1 j_2, k_1 k_2} = 
  \det[u_{i_1},v_{i_2}, u_{j_1}]  \det[v_{j_2}, u_{k_1},v_{k_2}]
- \det[u_{i_1},v_{i_2}, v_{j_2}]  \det[u_{j_1}, u_{k_1},v_{k_2}].
\end{equation*}
These are quadratic polynomials in the Pl\"ucker coordinates on 
${\rm Gr}(3,6)$.
They are invariant under Thaddeus' torus action and antisymmetric
under swapping each of the three index pairs. Note that of the 
$84$ polynomials
only $12$ are actually Pl\"ucker binomials. Of the others, $6$ are zero 
(for example, $p_{11,21,31} = 0$) and $66$ are Pl\"ucker monomials 
(for example,
$p_{11, 21,32} = \det[u_1, v_1, u_2] \det[v_1, u_3, v_2]$).
This resulting map ${\rm Gr}(3,6) \rightarrow {\rm Gr}(3,9)$ 
gives an embedding of Thaddeus' quotient  $H_{3,2} = {\rm Gr}(3,6)/K^*$.
The cubic relations on ${\rm Gr}(3,9)$ described above
characterize the image of this embedding. \qed
\end{ex}

\begin{remark}
\label{remark:Lafforgue}
In the introduction of \cite{Laf},
Lafforgue describes the following compactification of $G^{n}/G$.
We consider the $d \times d$-minors of the
 $d \times (dn)$-matrix $\,(A_1,A_2,\ldots,A_n)$.
For each minor there is a corresponding vector ${\bf i}$ in
the set $\, D  =  \bigl\{
(i_1,i_2,\ldots,i_n) \in \N^n : i_1  + \cdots + i_n = d \bigr\} $,
namely, $i_j$ is the number of columns of $A_j$ occurring in that minor.
We introduce a new unknown $t_{\bf i}$ for each ${\bf i} \in D$,
and we multiply each minor by the corresponding unknown $t_{\bf i}$.
The scaled minors parametrize a subvariety in an affine space of dimension
$$ \binom{nd}{d} \,\,\, = \,\,\,
\sum_{{\bf i} \in D}  \binom{d}{i_1} \binom{d}{i_2}\cdots \binom{d}{i_n}. $$
This affine variety yields a projective variety $X_{d,n}$ which compactifies 
$G^{n}/G$:
\begin{equation}
\label{lafhook}
 G^{n}/G\,\,\, \hookrightarrow\,\,\,
 X_{d,n} \,\,\,\subset \,\,\,
 \prod_{{\bf i} \in D}\PP^{\binom{d}{i_1} \binom{d}{i_2}\cdots \binom{d}{i_n}-1}.
\end{equation}
In light of \cite[\S 2]{HS}, we can identify
$X_{d,n}$ with the partial multigraded Hilbert scheme
$(H_{d,n})_D$ obtained by restricting $H_{d,n}$ to the subset
of degrees $D \subset\Z^n$. Hence there is a natural 
morphism $H_{d,n} \rightarrow X_{d,n}$. This is an isomorphism
for $d = 2$ and $n=2$ but we do not know whether this is always the case.
In general, $X_{d,n}$ is singular, and the main result of \cite{Laf} is 
a combinatorial construction that replaces $X_{d,n}$
with another -- less singular -- model $\Omega_{d,n}$.
Yet, as discussed in the erratum to \cite{Laf},
$\,\Omega_{d,n}$ is not smooth for $d,n \geq 4$. \qed
\end{remark}

\section{Yet another space of trees}

This section concerns the case $d=2$.
The Hilbert scheme $H_{2,n}$
parametrizes degenerations of the projective line
in its diagonal embedding $\,\PP^1 \hookrightarrow (\PP^1)^n$.
Our goal is to prove the following two theorems about the structure of $H_{2,n}$.

\begin{thm} \label{thm:IrrButSing}
The multigraded Hilbert scheme $H_{2,n}$ is irreducible,
so it equals the compactification $ \overline{{\rm PGL}(2)^{n}/{\rm PGL(2)}}$.
However, $H_{2,n}$ is  singular for $n \geq 4$.
\end{thm}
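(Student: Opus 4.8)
The plan is to prove the two assertions separately, exploiting the $T^n$-action together with the irreducible component $\overline{G^{n}/G}$ of dimension $3(n-1)=(2^2-1)(n-1)$ identified in Section 3 (here $G={\rm PGL}(2)$). For irreducibility I would first enumerate the $T^n$-fixed points of $H_{2,n}$, namely the monomial ideals having the Hilbert function (\ref{eqn:ourHF}). Since $d=2$, each column of $X$ is a homogeneous coordinate on one $\PP^1$ factor, and I expect these monomial ideals to be indexed by trees with $n$ labeled directed edges, as announced in the introduction; the classification is a direct combinatorial bookkeeping of which squarefree monomials can occur while preserving the graded Hilbert series. The first substantive step is then to show that \emph{every} such monomial ideal $I$ lies on $\overline{G^{n}/G}$. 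I would do this by exhibiting $I$ as a Gr\"obner limit ${\rm in}_w\big((A_1,\dots,A_n)\circ I_2(X)\big)$ for suitable $A_i\in G$ and weight $w$, built by induction on $n$: attaching one further edge to a tree corresponds to letting a single M\"obius factor degenerate so that one more $\PP^1$ bubbles off, which is realized by an explicit one-parameter subgroup of $G^n$. This places all $T^n$-fixed points, and hence the closure of each attracting Bia\l ynicki--Birula cell, inside the irreducible set $\overline{G^{n}/G}$.

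Upgrading ``all fixed points lie on $\overline{G^{n}/G}$'' to the equality $H_{2,n}=\overline{G^{n}/G}$ is the crux of the irreducibility proof. Connectedness (Corollary \ref{cor:connected}) is not sufficient by itself, since a connected but reducible scheme can be glued along its fixed points. The cleanest route I see is to prove that $H_{2,n}$ is \emph{locally irreducible} at each monomial ideal: the deformation theory of a tree of $\PP^1$'s ought to factor, formally locally, as a product of local models indexed by the vertices of the tree, each model depending only on the valence of its vertex. Granting such a product structure, irreducibility of the single-vertex models forces a unique component through every $T^n$-fixed point, and together with the previous paragraph this gives $H_{2,n}=\overline{G^{n}/G}$, hence irreducibility. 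The main obstacle is exactly the justification of this local decomposition and the identification of the single-vertex models. As a parallel strategy I would instead try to construct an irreducible parameter variety $W$ --- a tower assembled from the tree data and the residual M\"obius moduli --- admitting a surjective morphism $W\twoheadrightarrow H_{2,n}$, so that irreducibility of $W$ transfers to its image.

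For the singularity statement I would argue through tangent-space dimensions. Once $H_{2,n}$ is known to be irreducible of dimension $3(n-1)$, a point $I$ is singular as soon as $\dim_K {\rm Hom}_{K[X]}(I,K[X]/I)_0 > 3(n-1)$. The chain ideal $M$ of Section 3 has tangent-space dimension exactly $3(n-1)$ by Lemma \ref{lem:diag}, so the candidate singular points are the more highly branched trees, which first become available when a vertex of valence $4$ exists, that is, when $n\geq 4$. A natural test case is the Borel-fixed star ideal $Z=\bigcap_{u\in U}Z_u$ at $n=4$, geometrically the union of four coordinate lines through a common point. I would compute $\dim_K {\rm Hom}_{K[X]}(Z,K[X]/Z)_0$ directly and verify that it strictly exceeds $9=3(4-1)$. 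For $n\leq 3$ the same computation should return exactly $3(n-1)$, matching the known smoothness of $H_{d,2}$ and of $H_{2,3}$ and explaining the sharp threshold at $n=4$.

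To pass from a single singular point at $n=4$ to all $n\geq 4$, I would again invoke the local product structure: any tree carrying a valence-$4$ vertex yields a monomial ideal whose formal neighborhood in $H_{2,n}$ contains the singular valence-$4$ factor, and such trees exist for every $n\geq 4$. Equivalently, one can extend the $n=4$ computation by the ``bubbling'' homomorphisms attached to the additional edges to exhibit, for each $n\geq 4$, more than $3(n-1)$ independent degree-zero homomorphisms from the ideal to its quotient. I expect the genuinely hard part of the theorem to be the local analysis underlying irreducibility in the second paragraph; by comparison the singularity at $n=4$ is a finite, if delicate, tangent-space computation.
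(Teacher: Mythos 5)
Your singularity argument is essentially the paper's: exhibit more than $3(n-1)$ independent degree-zero homomorphisms at a tree with a vertex of valence at least $4$ (the star ideal $Z$ has tangent space dimension $n(n-1)$, which exceeds $3(n-1)$ exactly when $n\geq 4$), and this only requires a lower bound on the tangent space, so it goes through once irreducibility pins the local dimension at $3(n-1)$. The gap is in the irreducibility half, and it sits exactly where you flag it. Your reduction is: (a) every $T^n$-fixed point lies on $\overline{G^n/G}$, and (b) $H_{2,n}$ is locally irreducible at every $T^n$-fixed point; together with the fact that every irreducible component is $T^n$-stable and hence contains a fixed point, this would indeed force $H_{2,n}=\overline{G^n/G}$. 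But (b) is never established. The proposed ``local product structure'' of the deformation space over the vertices of the tree is an assertion, not an argument, and even granting it you would still have to prove irreducibility of the single-vertex local model at a vertex of high valence --- which is the original problem in miniature, at precisely the points where $H_{2,n}$ is singular and where formal-local irreducibility is least obvious. The alternative route via a surjection $W\twoheadrightarrow H_{2,n}$ is likewise only named, not constructed; to build such a $W$ you would need to know in advance what all the points of $H_{2,n}$ look like, which is the real content.

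The paper closes this gap by classifying \emph{all} points of $H_{2,n}$, not just the fixed ones, and this is where Theorem~\ref{thm:radical} earns its keep: since every $I\in H_{2,n}$ is radical, Lemma~\ref{lem:43} and Proposition~\ref{prop:construction-ideals-h2n} identify the points of $H_{2,n}$ exactly with connected trees of projective lines in $(\PP^1)^n$ having one component projecting isomorphically to each factor. Irreducibility is then an induction on the number of components of $V(I)$: a leaf component $Y_1$ is merged with its neighbor $Y_2$ into a single line labeled $F_1\cup F_2$, producing an ideal $I'\in H_{2,n}$ with one fewer component, and an explicit one-parameter subgroup (scaling $y_j\mapsto ty_j$ for $j\in F_1$) degenerates $I'$ to $I$; comparison of Hilbert functions shows the flat limit is exactly $I$. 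No local analysis at singular points is needed. I would recommend replacing your second paragraph with this global classification-plus-induction argument; your bubbling construction for monomial ideals is a special case of it, but restricted to fixed points it cannot rule out an extra component meeting $\overline{G^n/G}$ only along its fixed locus.
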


Our second theorem explains why we
refer to $H_{2,n}$ as a {\em space of trees}. The qualifier
``yet another'' has been prepended 
to emphasize that this is not the 
 {\em space of phylogenetic trees}. The latter
is familiar to algebraic geometers as a discrete
model for $\overline{\mathcal{M}_{0,n}}$;
see \cite[Theorem~1.2]{GM} for a precise statement.

Following \cite{AS}, there is a natural graph structure 
on any multigraded Hilbert scheme, including $H_{2,n}$. The vertices are the
monomial ideals, and for every ideal in $H_{2,n}$ with precisely two initial
monomial ideals there is an edge between the corresponding vertices.
By \cite[Theorem 11]{AS}, this is precisely the induced subgraph 
on $H_{2,n}$ of the graph of all monomial ideals.
We note that our graph is not a {\em GKM graph}
in the sense of \cite{GHZ} because the $T^n$-fixed subvarieties
corresponding to edges usually have dimension greater than one.

\begin{thm} \label{thm:YASOT}
There are $\,2^n (n{+}1)^{n-2}\,$ monomial ideals in $H_{2,n}$,
one for each tree on $n{+}1$ unlabeled vertices
with $n$ labeled directed edges. Two trees are connected by
an edge on $H_{2,n}$ if they differ by one of the following operations:
\begin{enumerate}
\item Move any subset of the trees attached at a vertex to an adjacent vertex.
\item Swap two edges that meet at a bivalent vertex (preserving orientation).
\end{enumerate}
\end{thm}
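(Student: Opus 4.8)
The plan is to classify all monomial (equivalently $T^n$-fixed) ideals in $H_{2,n}$ by their generators, read off the tree bijection, and then analyze the adjacency structure through the graph of \cite{AS}. Write the variables as $x_{1j},x_{2j}$. Comparing the target Hilbert function~(\ref{eqn:ourHF}) for $d=2$ against the ambient monomial counts in the bidegrees $e_j$, $e_i+e_j$, $2e_i$ and $2e_i+e_j$ forces the following: $I$ has no linear generators, and for each pair $i<j$ it contains exactly one monomial of degree $e_i+e_j$, say $g_{ij}=x_{r(i;j),\,i}\,x_{r(j;i),\,j}$ with row-indices $r(\cdot\,;\cdot)\in\{1,2\}$, which is a minimal generator, while no minimal generators occur in those higher bidegrees. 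In bidegree $e_i+e_j+e_k$ an inclusion–exclusion count of the monomials divisible by $g_{ij},g_{ik},g_{jk}$ shows that the required quotient dimension $4$ is attained if and only if at least two of the three equalities $r(i;j)=r(i;k)$, $r(j;i)=r(j;k)$, $r(k;i)=r(k;j)$ hold. Thus every monomial ideal in $H_{2,n}$ is generated by the $\binom{n}{2}$ quadrics $g_{ij}$ subject to this \emph{triple condition}.

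Next I would interpret $r(p;\cdot)$ as the split $S^1_p\mid S^2_p$ of the remaining indices, $S^s_p=\{q:r(p;q)=s\}$, together with a labelling of its two sides. The triple condition says precisely that among any three indices at most one separates the other two, i.e.\ the splits are pairwise compatible; by the standard correspondence between pairwise-compatible split systems and trees, such a system is realized by a unique tree $T$ on $n+1$ unlabeled vertices whose $n$ edges carry the labels $1,\dots,n$, with the side-labelling $r$ amounting to an orientation of each edge. This gives an injection from monomial ideals to directed edge-labelled trees. For surjectivity I must check that the quadric ideal $J_T$ attached to any such tree actually lies in $H_{2,n}$: I would exhibit its $n$ minimal primes as the coordinate-curve primes $\langle x_{c_i,i}:i\neq\ell\rangle$ for $\ell=1,\dots,n$ dictated by $T$, identify $J_T$ with their intersection (a reduced union of coordinate curves dual to $T$), and verify the multigraded Hilbert function~(\ref{eqn:ourHF}) by induction on the number of leaves of $T$, contracting a leaf edge to relate $J_T$ to a point of $H_{2,n-1}$. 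The count $\,2^n(n+1)^{n-2}\,$ then follows from the Pr\"ufer-type enumeration of directed edge-labelled trees on $n+1$ unlabeled vertices (the $2^n$ coming from the orientations, which for $n\geq 2$ are unconstrained since distinct edge labels kill all nontrivial automorphisms).

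For the adjacency relation I would use the cited fact that two monomial ideals are joined by an edge of $H_{2,n}$ exactly when some $J\in H_{2,n}$ has precisely two initial monomial ideals, namely the two given ones. For each of the two listed operations I would produce an explicit interpolating pencil: replacing one generator $g_{ij}$ by a binomial $x_{r,i}x_{s,j}-t\,x_{r',i}x_{s',j}$ and adjusting the forced companion generators, I would exhibit a $T^n$-invariant $\PP^1$ in $H_{2,n}$ whose two $T^n$-fixed points are the two trees and on which $T^n$ acts through a single character, so that these are its only two initial ideals. Operation~(2) is the purely local swap at a bivalent vertex, where only three columns in a path configuration are involved; operation~(1) is the reattachment of subtrees across an edge, which amounts to re-recording the splits $r$ for exactly the edges that change sides. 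Conversely, given $J$ with exactly two initial ideals, comparing their quadric generator sets shows the two trees differ by a single wall-crossing, and I would verify that every such minimal difference is one of the two operations.

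The two substantial points are the sufficiency in the second paragraph---that every compatible split system/tree yields a genuine point of $H_{2,n}$, i.e.\ the Hilbert-function computation for $J_T$ in all multidegrees, which I expect the leaf induction to control---and, more seriously, the completeness direction of the adjacency analysis: classifying all $J\in H_{2,n}$ with exactly two initial ideals and proving that the induced pair of trees always differs by operation~(1) or~(2), with no further adjacencies. Enumerating these connecting $T^n$-invariant curves and ruling out extra edges is where I expect the real work to lie; the irreducibility of $H_{2,n}$ from Theorem~\ref{thm:IrrButSing} provides a useful global constraint but does not by itself pin down the local adjacency combinatorics.
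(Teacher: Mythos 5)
Your first half is a genuinely different route to the bijection than the paper's. You classify the monomial ideals directly from Hilbert-function counts on generators, arriving at a ``triple condition'' on the row-choices $r(p;q)$ that you then read as a compatible split system. The arithmetic there checks out: in degree $e_i+e_j+e_k$ the inclusion--exclusion does force at least two of the three equalities, which is exactly the condition that no two of the three edges separate each other, i.e.\ tree-compatibility. The paper instead derives the tree from geometry: Lemma~\ref{lem:43} shows $V(I)$ is a tree of projective lines with exactly one component projecting isomorphically to each factor, and Proposition~\ref{prop:construction-ideals-h2n} supplies the converse Hilbert-function verification by an inclusion--exclusion induction on components (removing a leaf component $Y_j$ and using $\HF(I'\cap I_j)=\HF(I')+\HF(I_j)-\HF(I'+I_j)$). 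Your deferred ``leaf induction'' for surjectivity is essentially this same computation, so that gap is fillable; note, though, that your low-degree counts alone do not yet establish that $I$ is \emph{generated} by the quadrics $g_{ij}$ --- that only follows once the Hilbert function of $\langle g_{ij}\rangle$ is verified in all multidegrees. The counting of trees is fine in both treatments.

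The adjacency half has a genuine gap in the completeness direction, and your proposed substitute would not work as stated. Exhibiting the interpolating pencils for operations (1) and (2) is fine (these are the deformations $\beta_v$ and $\alpha_{k\ell}$ realized as explicit ideals, as in the proof of Proposition~\ref{prop:tangentspace}). But the claim that ``comparing their quadric generator sets shows the two trees differ by a single wall-crossing'' is not an argument: an edge of the graph is defined by the existence of a $J\in H_{2,n}$ with \emph{exactly two} initial monomial ideals, and such a $J$ need not be a one-generator perturbation --- operation (1) with a subset of size greater than one changes many generators at once. What is needed is a classification of all non-monomial points of $H_{2,n}$ together with a count of their torus degenerations, and your generator-based framework describes only the monomial ideals. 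The paper gets this by combining Theorem~\ref{thm:radical} and Proposition~\ref{prop:grothen} (so Gr\"obner degenerations are scheme-theoretic torus limits of the tree of lines), then arguing: a component labeled by $F_i$ with $|F_i|\geq 2$ already admits $|F_i|!$ distinct degenerations, forcing $|F_i|=2$ with all intersection points torus-fixed (swap~2); otherwise all labels are singletons and exactly one component may carry non-torus-fixed intersection points (move~1). Without this geometric description of arbitrary points of $H_{2,n}$, or some equivalent classification of the ideals with exactly two initial ideals, your proof of the edge characterization does not close.
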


In this section we use the following notation for our matrix of variables:
\begin{equation*}
X =  \begin{bmatrix}
x_1 & x_2 & \cdots & x_n \\
y_1 & y_2 & \cdots & y_n
\end{bmatrix}.
\end{equation*}
Thus $(x_i:y_i)$ are homogeneous coordinates
on the $i$-th factor in our
ambient space $(\PP^1)^n$.
The common Hilbert function (\ref{eqn:ourHF})
of all ideals $I$ in $H_{2,n}$ equals
\begin{equation} \label{eqn:ourHF2}
\N^n\, \rightarrow \,\N \,,\,\,\, (u_1,u_2,\ldots,u_n)\, \mapsto \, u_1 + u_2 + \cdots + u_n + 1 . 
\end{equation}
The unique Borel-fixed ideal in $H_{2,n}$ equals
$ \,  Z = \langle x_i x_j : 1 \leq i < j \leq n \rangle$.
Our first goal is to prove that $H_{2,n}$ is irreducible. This
requires a combinatorial description of the 
subvarieties $V(I)$ of $(\PP^1)^n$ 
corresponding to ideals $I \in H_{2,n}$.
Note that
each such subvariety is a reduced curve of multidegree $(1,1,\ldots,1)$ in
$(\PP^1)^n$.

\begin{lem} \label{lem:43}
The variety $V(I) \subset (\PP^1)^n $  defined by any ideal $I \in H_{2,n}$ is
the reduced union of several copies of
the projective line $\,\mathbb P^1$. For each factor of
$(\PP^1)^n$ there is exactly
one component of $V(I)$ which is not constant along this factor, and for this
component, the projection induces an isomorphism.
\end{lem}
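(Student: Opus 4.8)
The plan is to read everything off the multidegree of $V(I)$, which Theorem~\ref{thm:radical} and Corollary~\ref{cor:CM} pin down completely. First I would record the two structural inputs already in hand: by Theorem~\ref{thm:radical} the ideal $I$ is radical, so $V(I)$ is reduced, and by Corollary~\ref{cor:CM} it is Cohen--Macaulay, hence unmixed and pure of dimension one. Specializing the multidegree formula~(\ref{eqn:multidegofZ}) to $d=2$, where $U$ consists of the $0/1$-vectors with exactly one vanishing coordinate, gives
\begin{equation*}
\mathcal C\bigl(K[X]/I;\mathbf t\bigr)\;=\;\sum_{j=1}^n\prod_{i\neq j}t_i,
\end{equation*}
so $V(I)$ is a curve of multidegree $(1,\dots,1)$. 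Writing $V(I)=C_1\cup\cdots\cup C_m$ for its decomposition into irreducible components, all of dimension one by purity, the multidegree is additive over the $C_r$ because $I$ is reduced and equidimensional, so $\sum_r [C_r]=\sum_{j}\prod_{i\neq j}t_i$ in the Chow ring $\Z[t_1,\dots,t_n]/(t_1^2,\dots,t_n^2)$ of $(\PP^1)^n$.

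Next I would extract the coefficients of the individual classes $[C_r]$. Intersecting $[C_r]$ with $t_j$ isolates the coefficient of $\prod_{i\neq j}t_i$, and this coefficient equals the degree $d_{r,j}$ of the projection $\pi_j|_{C_r}\colon C_r\to\PP^1$ onto the $j$-th factor, with the convention $d_{r,j}=0$ when $\pi_j(C_r)$ is a point. These are nonnegative integers, and comparing with the total multidegree forces $\sum_r d_{r,j}=1$ for every $j$. Thus for each factor $j$ exactly one component $C_r$ has $d_{r,j}=1$ while all others have $d_{r,j}=0$; equivalently, exactly one component is non-constant along the $j$-th factor, and it maps to that factor with degree one, hence birationally. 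This already yields the "exactly one non-constant component per factor" half of the statement.

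The remaining, genuinely substantive, step is to upgrade this birational projection to an isomorphism, i.e.\ to rule out that a component is a singular rational curve. Here I would pass to the normalization $\nu\colon\PP^1\to C_r$ of a component (it is rational since some projection is birational) and set $S=\{k:d_{r,k}=1\}$. For each $k\in S$ the composite $\pi_k\circ\nu\colon\PP^1\to\PP^1$ has degree one, hence is an automorphism $\phi_k$, while for $k\notin S$ the $k$-th coordinate of $\nu$ is constant. Therefore $\nu$ is, up to the constant coordinates, the map $t\mapsto(\phi_k(t))_{k\in S}$, a product of automorphisms, which is an injective immersion and so a closed embedding of $\PP^1$. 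Consequently $C_r\cong\PP^1$ and every projection $\pi_k|_{C_r}$ with $k\in S$ is an isomorphism; in particular the unique component non-constant along factor $j$ projects isomorphically onto that factor. Since every component is non-constant in at least one direction, each $C_r$ is a copy of $\PP^1$, which gives the lemma. I expect this final step---promoting birational to biregular by exploiting the product-of-automorphisms structure on the normalization---to be the main obstacle, the earlier steps being bookkeeping with the multidegree.
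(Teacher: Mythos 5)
Your proof is correct, and it follows the same overall strategy as the paper's: force the projection to each factor to have total degree one summed over the components, and then upgrade the resulting birational projection to an isomorphism. The two halves are, however, implemented with different tools. For the degree count, the paper reads off directly from the Hilbert function~(\ref{eqn:ourHF2}) that the projection to the $i$-th factor is an isomorphism over a dense open subset of $\PP^1$, and deduces from this a rational map $\PP^1 \dashrightarrow Y$ to a unique component; you instead specialize the multidegree formula~(\ref{eqn:multidegofZ}) to $d=2$ and use additivity of the multidegree over the components of a reduced equidimensional scheme. These are equivalent inputs (the multidegree is determined by the Hilbert series, as the paper notes in Section~2), but your version makes the bookkeeping across several components explicit and also makes explicit the appeal to Corollary~\ref{cor:CM} for purity, which the paper leaves tacit. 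For the final upgrade from birational to biregular, the paper extends the rational inverse to a morphism $\PP^1 \to Y$ (legitimate since $\PP^1$ is a smooth curve and $Y$ is projective) and concludes the projection is an isomorphism, whereas you normalize $C_r$ and observe that the composite $\PP^1 \to (\PP^1)^n$ has every coordinate either constant or an automorphism of $\PP^1$, hence is a closed embedding. Both are valid; your normalization argument is arguably the more self-contained of the two, since the paper's intermediate assertion that regularity of $Y \to \PP^1$ ``implies that the curve $Y$ is smooth'' is stated without justification.
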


\begin{proof}
Consider the projection from $V(I)$ onto the $i$-th factor of $(\PP^1)^n$.
We infer
from the Hilbert function (\ref{eqn:ourHF2}) that this projection is an isomorphism
over an open subset of $\PP^1$.
Hence there exists a rational map from $\mathbb P^1$ to a unique component $Y$ of
$V(I)$. The fact that the projection $Y \rightarrow \PP^1$ is a regular morphism implies that
the curve $Y$ is smooth. We conclude that the map $Y \rightarrow \PP^1$ is an  isomorphism.
\end{proof}

Each component of $V(I)$ can be labeled by the factors onto which it maps
isomorphically.  We draw $V(I)$
as a set of intersecting lines, labeled with subsets of the factors. 
By Lemma \ref{lem:43}, the labels form a partition of $\{1,2,\ldots,n\}$.
Moreover, since $K[X]/I$ is Cohen-Macaulay, $V(I)$ is connected,
and because only one component is non-constant along any factor, there is no
cycle among its components.
Hence our picture is an edge-labeled tree.
 We have the following converse to this description of the points of
$H_{2,n}$.

\begin{prop} \label{prop:construction-ideals-h2n}
Suppose that $Y \subset (\PP^1)^n$ is a
union of projective lines, which is connected and such that
each factor of $(\PP^1)^n$ has a unique 
projective line projecting isomorphically onto it.  Then
the radical ideal $I$ defining $Y$ is a point in $H_{2,n}$.
\end{prop}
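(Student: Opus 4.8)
The plan is to verify the only nontrivial requirement for $I$ to be a point of $H_{2,n}$, namely that $K[X]/I$ has the Hilbert function (\ref{eqn:ourHF2}); multigradedness and reducedness come for free. Since $Y$ is a closed subscheme of $(\PP^1)^n$, its full ideal $I=I(Y)$ is automatically $\Z^n$-homogeneous, and since $Y$ is reduced, a multihomogeneous form of degree $u$ lies in $I$ exactly when its restriction to $Y$ vanishes. Hence in each degree $u\in\N^n$ the piece $I_u$ is the kernel of the restriction map $r_u\colon K[X]_u=H^0((\PP^1)^n,\mathcal O(u))\to H^0(Y,\mathcal O_Y(u))$, so, using $H^{>0}((\PP^1)^n,\mathcal O(u))=0$ for $u\ge 0$,
$$\dim_K (K[X]/I)_u=\dim_K\operatorname{im}(r_u)=h^0(Y,\mathcal O_Y(u))-h^1((\PP^1)^n,\mathcal I_Y(u)).$$
Thus it suffices to prove the two cohomological statements $h^0(Y,\mathcal O_Y(u))=u_1+\cdots+u_n+1$ and $H^1(\mathcal I_Y(u))=0$ for every $u\in\N^n$.

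Both are proved together by induction on the number of component lines of $Y$, peeling off a leaf. By Lemma \ref{lem:43} and the surrounding discussion, $Y$ is a tree whose edges are its component lines, each labeled by a nonempty block $S_e$ of a partition of $\{1,\dots,n\}$, with $L_e$ projecting isomorphically onto the factors in $S_e$ and constant on the others. Choose a leaf edge $L$ with label $S_L$ and set $Y'=\overline{Y\setminus L}$, so $Y'$ is connected and meets $L$ in a single point $p$. Because $L$ varies only in the factors $S_L$ while every component of $Y'$ is constant there, the two branches at $p$ point in complementary factor sets; the intersection is transverse and $Y'\cap L=\{p\}$ is a reduced point. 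This gives the Mayer--Vietoris sequences $0\to\mathcal O_Y(u)\to\mathcal O_{Y'}(u)\oplus\mathcal O_L(u)\to k_p\to 0$ and $0\to\mathcal I_Y(u)\to\mathcal I_{Y'}(u)\to\mathcal I_{p,L}(u)\to 0$, where $\mathcal I_{p,L}(u)\cong\mathcal O_{\PP^1}(\delta-1)$ with $\delta=\sum_{i\in S_L}u_i$. Feeding in the inductive values $h^0(\mathcal O_{Y'}(u))=\sum_{i\notin S_L}u_i+1$, $H^1(\mathcal O_{Y'}(u))=0$, $H^1(\mathcal I_{Y'}(u))=0$, together with $h^0(\mathcal O_L(u))=\delta+1$ and $H^1(\mathcal O_L(u))=0$, the first sequence yields $h^0(\mathcal O_Y(u))=\sum_i u_i+1$ and $H^1(\mathcal O_Y(u))=0$, since the evaluation $H^0(\mathcal O_L(u))\to k_p$ is visibly onto. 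The base case $m=1$ forces $S_L=\{1,\dots,n\}$, where $L\cong\PP^1$ is embedded by complete linear systems and all claims are immediate.

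The one real obstacle is the vanishing $H^1(\mathcal I_Y(u))=0$, i.e.\ the arithmetic Cohen--Macaulayness of the tree of lines. From the second Mayer--Vietoris sequence and $H^1(\mathcal I_{Y'}(u))=0$ one gets $H^1(\mathcal I_Y(u))=\operatorname{coker}\big(\rho\colon H^0(\mathcal I_{Y'}(u))\to H^0(\mathcal I_{p,L}(u))\big)$, so everything reduces to showing that restriction to $L$ carries degree-$u$ forms vanishing on $Y'$ onto all sections of $\mathcal O_L(u)$ vanishing at $p$. Here the tree structure is decisive, and this is the step I expect to require the most care. Fix $i_0\in S_L$; since $Y'$ is connected and each of its components is constant in the factor $i_0$, any two of them agree in that factor at their intersection, so all of $Y'$ shares a single $i_0$-coordinate, necessarily equal to that of $p$. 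The linear form $\ell_{i_0}$ in $x_{i_0},y_{i_0}$ vanishing at that value therefore lies in $I(Y')$ and restricts on $L$ to a nonzero multiple of the linear form cutting out $p$. Every section of $\mathcal O_L(u)$ vanishing at $p$ factors as that linear form times a form $w$ of degree $\delta-1$; writing $w=W|_L$ for some $W\in K[X]_{u-e_{i_0}}$ (possible by the single-line base case, which gives surjectivity of $K[X]_{u-e_{i_0}}\to H^0(\mathcal O_L(u-e_{i_0}))$), the product $\ell_{i_0}W\in I(Y')_u$ has the prescribed restriction. Hence $\rho$ is surjective, $H^1(\mathcal I_Y(u))=0$, the induction closes, and combining the two computations gives $\dim_K(K[X]/I)_u=u_1+\cdots+u_n+1$, so $I\in H_{2,n}$.
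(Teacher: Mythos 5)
Your argument is correct, and it rests on exactly the same induction as the paper's: peel off a leaf line $L$ of the tree, which meets the union $Y'$ of the remaining components transversally in a single reduced point. Where you differ is in the bookkeeping. The paper stays entirely inside the graded ring: from the exact sequence $0\to K[X]/(I'\cap I_j)\to K[X]/I'\oplus K[X]/I_j\to K[X]/(I'+I_j)\to 0$ it reads off $\HF(I)=\HF(I')+\HF(I_j)-\HF(I'+I_j)$, and the only geometric input is that $I'+I_j$ equals the linear prime of the intersection point (your transversality), hence has constant Hilbert function $1$. That one identity absorbs both of your cohomological steps --- the computation of $h^0(\mathcal O_Y(u))$ and the vanishing of $H^1(\mathcal I_Y(u))$ --- so the paper never needs the surjectivity of your restriction map $\rho$, which is the most delicate part of your write-up. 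In exchange, your route makes explicit a fact the paper leaves implicit, namely that $K[X]_u\to H^0(\mathcal O_Y(u))$ is surjective in every multidegree (degreewise projective normality of the tree of lines). One small point to tidy: in the surjectivity step you must choose $i_0\in S_L$ with $u_{i_0}\geq 1$ so that $W\in K[X]_{u-e_{i_0}}$ makes sense; this is possible whenever $\delta=\sum_{i\in S_L}u_i\geq 1$, and when $\delta=0$ the target $H^0(\mathcal I_{p,L}(u))\cong H^0(\mathcal O_{\PP^1}(-1))=0$, so there is nothing to prove.
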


\begin{proof}
We compute the Hilbert function and show that it coincides with (\ref{eqn:ourHF2}).
We proceed by induction on the number of components.
If $Y$ is irreducible then $Y$ is the translate of the diagonal
$\PP^1 $ in $(\PP^1)^n$ with some $A_i \in {\rm PGL}(2)$
acting on the $i$-th factor,
and therefore $\,I\, =\, (A_1,\ldots,A_n) \circ I_2(X)\,$ lies in $H_{2,n}$.

Now suppose $Y$ is reducible, let $Y_j$ be one of its components,
and  $F_j \subset \{1,\ldots,n\}$ the index set of factors of $(\PP^1)^n$
onto which $Y_j$ maps isomorphically. The prime ideal $I_j$ of $Y_j$
is generated by linear forms of multidegrees $\{e_i: i \not\in F_j \}$,
and by the $2 \times 2$-minors of a $2 \times |F_j|$-matrix $X_j$
which consists of the $F_j$ columns of $X$ acted on
by some $A_i \in {\rm PGL}(2)$.
The Hilbert function of $I_j$ is
\begin{equation*}
u \,\mapsto \,1 + \sum_{i \in F_j} u_i.
\end{equation*}
Since $Y$ is a tree of projective lines, there exists a component
$Y_j$ which has only one point of
intersection with the other components. Let 
$Y' \subset (\PP^1)^n$ be the union of the other components
and $I'$ the radical ideal defining $Y'$.  The ideal $I_j$ of
$Y_j$ contains linear forms of degree $e_i$ for every $i \not\in F_j$,
while $I'$ contains linear forms of degree $e_i$ for every $i \in F_j$.
This implies that $I' + I_j$ is a homogenous prime ideal 
generated by linear forms. Its variety equals $Y' \cap Y_j$,
and hence $I'+I_j$  has constant Hilbert function $1$. We conclude
\begin{align*}
\HF(I)  \,=\, \HF(I' \cap I_j) \,& =  \,\HF(I') + \HF(I_j) - \HF(I' + I_j)  \\
  &= \, \bigl( 1 + \sum_{i \in F_j} u_i \bigr) \,\, +\,\,
         \bigl( 1 +\sum_{i \not\in F_j} u_i \bigr) \,\,-\,\, 1,
\end{align*}
which  is the common Hilbert function (\ref{eqn:ourHF2})
of all ideals in $H_{2,n}$.
\end{proof}

Our discussion shows that each point in $H_{2,n}$
is characterized by the following data.
First, there is a tree of projective lines $Y_j = \PP^1$,  
labeled by the parts in a partition $\{1,\ldots,n\} = \cup_j F_j$.
These represent the factors of the ambient space $(\PP^1)^n$.
The intersection point  of two lines determines a marked
point on each of the two lines.
For each line labeled with more than one factor, we have a compatible set of
isomorphisms between those factors.
Given these data, we can compute the ideal $I_j$ of a component $Y_j$ as follows:
\begin{enumerate}
\item Let $X_j$ be the submatrix of $X$ given by the columns
indexed by $F_j$, acted on by the
$2 {\times} 2 $-matrices corresponding to the isomorphisms of~$\PP^1$.
\item For each $i \not\in F_j$ locate the 
intersection point on $Y_i$ that is nearest to $Y_j$.
Let $\alpha x_i + \beta y_i$ be the linear form defining this intersection point on $Y_i$.
 \item
The ideal $I_j$ is generated by these linear forms and the
$2 {\times} 2$ minors of~$X_j$.
The intersection ideal $I = \cap_j I_j$ is
the desired point in $H_{2,n}$.
\end{enumerate}

\begin{ex} \rm
The above algorithm implies that there are infinitely many
${\rm PGL}(2)^n$-orbits on $H_{2,n}$ when $n \geq 5$.
Consider a tree of four lines, $Y_1$, $Y_2$, $Y_3$, and~$Y_4$,
which meet a fifth line  in four distinct points, with
coordinates $(0{:}1),(1{:}1),(1{:}0)$ and $(t \!:\! 1)$ on $Y_5 = \PP^1$.
Each of these intersection points is identified with the
point $V(x_j) = \{(0 \! : \! 1)\}$ on the line $Y_j$.
Then we have
\begin{equation*}
\begin{matrix}
I_1 =  \langle x_2, x_3, x_4,x_5 \rangle &
I_2 = \langle  x_1,x_3,x_4,x_5 - y_5 \rangle & 
I_3 = \langle x_1, x_2, x_4, y_5 \rangle \\
I_4 = \langle  x_1, x_2, x_3, x_5 - t y_5 \rangle \! &
I_5 = \langle x_1, x_2, x_3, x_4 \rangle & \!
I = I_1 \cap I_2 \cap I_3 \cap I_4 \cap I_5
\end{matrix}
\end{equation*}
As $t$ varies over the field $K$, the ideals $I$ lie in different 
${\rm PGL}(2)^5$-orbits on $H_{2,5}$ because
the cross ratio of the four points on $Y_5$ 
is invariant under ${\rm PGL}(2)$. \qed
\end{ex}

\begin{proof}[Proof of Theorem \ref{thm:IrrButSing}]
We shall prove that $H_{2,n}$ is irreducible. Let $I$ be any ideal in $H_{2,n}$. 
We use induction on the number of components of $V(I)$ to show that
$I$ is in the closure of the orbit of $I_2(X)$.
If $V(I)$ is irreducible, then $I$ is in the orbit of $I_2(X)$ by the above
discussion, so we assume that
$V(I)$ has at least two components. We shall
construct another ideal $J \in H_{2,n}$
such that $V(J)$ has one fewer component than $V(I)$
and such that $J$ degenerates to $I$.

Consider the tree picture of $V(I)$ as described above, and
let $Y_1$ be a component
which has exactly one point
of intersection with the other components. Let $Y_2$ be one of these components
intersecting $Y_1$. After relabelling the factors and a change of coordinates,
we can assume that the isomorphisms of the factors associated to $Y_1$ are all
the identity map,
and the same holds for the isomorphisms of factors of $Y_2$. Furthermore,
we can assume that the point $Y_1 \cap Y_2$ is defined
in $(\PP^1)^n$ by the ideal
$\,\langle x_i : i \in F_1 \rangle +  \langle y_i : i \not\in F_1 \rangle $.

We now replace $Y_2$ with the component
$Y_2'$ labeled by the set $F_2' = F_1 \cup F_2$ (with the identity
isomorphisms), and we call its ideal $I'$. By
Proposition~\ref{prop:construction-ideals-h2n}, $I'$ is in $H_{2,n}$. The ideal $I_2'$ 
of $Y_2'$ is generated by  $\{ x_j : j \not\in F_2'\}$ and the $2 {\times} 2$
minors of the submatrix of $X$ indexed by $F_2'$.
 For $t \neq 0$, we
consider the ideal formed by replacing $y_j$ by $ty_j$ in $I'$ for $j \in F_1$
and take the flat limit as $t$ goes to $0$. The limit of $I_2'$ under this
action is $I_1 \cap I_2$, so the limit of $I'$ is contained in $I$. Since
$I$ and the limit of $I'$ have the same Hilbert function, they must be equal.
This proves the first assertion in Theorem~\ref{thm:IrrButSing}.
The second assertion will follow from Corollary~\ref{cor:trivalent-smooth} below.
\end{proof}

We now come to the combinatorial description of monomial ideals $I$
on $H_{2,n}$. Here the tree picture can be simplified.
There are precisely $n$ components $V(I) = Y_1 \cup Y_2 \cup 
\cdots \cup Y_n$, and the partition is into singletons $F_i = \{i\}$.
Each line $Y_i$ has only two points where intersections are possible,
namely,  $V(x_i) = \{(0\! : \!1)\}$ and $V(y_i) = \{(1\! : \! 0)\}$.
We draw $Y_i$ as an oriented line segment with the
intersection points only at the end points. 
The orientation is indicated by an arrow 
whose tail represents 
$V(x_i)$ and whose head represents  $V(y_i)$.
In this manner, each monomial ideal $I$ in $H_{2,n}$
is represented uniquely by a tree $T$ with
$n$ directed labeled edges. The tree $T$
has $n {+} 1$  vertices which remain unlabeled.
This establishes the first part of Theorem~\ref{thm:YASOT}.

Our construction is illustrated for $n=3$ in
Figure \ref{fig:claw-chain}. See Example \ref{example:twothree} 
below for a combinatorial discussion 
of the two  classes of trees shown here.

\begin{figure}
\begin{centering}
\includegraphics{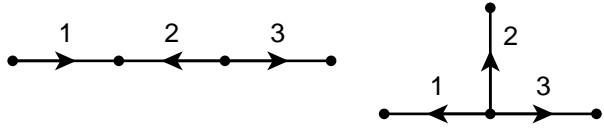}
\par\end{centering}
\caption{Edge-labelled trees corresponding to $2$ of the $32$ monomial ideals on
$H_{2,3}$. The tree on the left corresponds to the ideal $\langle y_1y_2,
y_1x_3, x_2x_3\rangle$ and on the right corresponds to the Borel-fixed ideal $Z
= \langle x_1x_2, x_1x_3, x_2x_3\rangle$.}
\label{fig:claw-chain}
\end{figure}

We next describe a rule for reading off the generators of
a monomial ideal $I \in H_{2,n}$ from its tree $T$.
For any two distinct indices $i$ and~$j$
in $\{1,\ldots,n\}$ we  
set $z_{ij} = x_j$ if the directed edge $j$ is pointing away from
the edge $i$ in $Y$ and $z_{ij} = y_j$ otherwise.
This means that the ideal $\,I_i \,$ of the component $Y_i$
 is generated by the variables
$\, z_{ij}\,$ for $j \in \{1,\ldots,n\}\backslash \{i\}$.
By intersecting these ideals for all $i$ we obtain the
following combinatorial formula for the ideal $I$.

\begin{remark}
The monomial ideal 
associated with the tree $T$ equals
\begin{equation*} I \,\, = \,\,\langle \,z_{ij}z_{ji} \,:\,
1 \leq i < j \leq n \,\rangle. \end{equation*}
Explicitly, the ideal generator corresponding to
pair $\{i,j\}$ of edges equals
\begin{equation*}
z_{ij} z_{ji} \,\,\, = \,\,\,
\begin{cases}
\,\, y_i y_j  & \hbox{ if the edges $i$ and $j$ point towards each other, } \\
\,\, x_i x_j & \hbox{ if the edges $i$ and $j$ point away from each other, }\\
\,\, y_i x_j & \hbox{ if the edge $i$ points to the edge $j$ but not conversely, } \\
\,\, x_i y_j & \hbox{ if the edge $j$ points to the edge $i$ but not conversely. }
\end{cases}
\end{equation*}
\end{remark}

Note that the Borel-fixed ideal $Z$ corresponds to 
 the star tree with all edges directed outwards.
Our next result concerns tangent spaces of $H_{2,n}$.

\begin{prop} \label{prop:tangentspace}
Let $I $ be the monomial ideal corresponding to a
directed tree $T$ as above.
Then the dimension of the tangent space of $H_{2,n}$ at $I$ is
\begin{equation} \label{eqn:fsum}
\sum_{v} f(\operatorname{degree}(v))
\end{equation}
where the sum is over all vertices of $T$, and the function~$f$ is defined
by
\begin{equation*}
f(a) = \begin{cases} \,3(a-1) & \hbox{if} \,\,\, 1 \leq a \leq 3, \\
\, a(a-1) & \hbox{if} \,\,\, a \geq 3. \end{cases}
\end{equation*}
\end{prop}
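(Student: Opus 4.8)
The plan is to compute the tangent space directly as the space of degree-preserving module homomorphisms $\operatorname{Hom}_{K[X]}(I,K[X]/I)_0$, exactly as in the proof of Lemma~\ref{lem:diag}. Since $I=\langle z_{ij}z_{ji}:1\le i<j\le n\rangle$ is a monomial ideal generated in the multidegrees $e_i+e_j$, such a $\phi$ is determined by its values $\phi(z_{ij}z_{ji})\in(K[X]/I)_{e_i+e_j}$, and each value is a $K$-linear combination of the three standard monomials of multidegree $e_i+e_j$, namely the three elements of $\{x_i,y_i\}\cdot\{x_j,y_j\}$ other than $z_{ij}z_{ji}$ itself. I would separate these three coefficients into the two \emph{one-flip} coefficients (monomials differing from the generator in exactly one factor) and the single \emph{both-flip} coefficient, because these behave quite differently. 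The homomorphism condition is that $\phi$ respect the first syzygies of $I$, and for a monomial ideal it suffices to check the pairwise S-pair syzygies.

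First I would classify the S-pairs and show which ones are vacuous. If $\{i,j\}$ and $\{k,l\}$ are disjoint, the S-pair is a Koszul syzygy and imposes nothing, since both generators lie in $I$. If two generators share a factor $i$ but carry opposite variables there (equivalently, the two edges lie on opposite sides of edge $i$ in the tree), a short reduction shows that both sides of the relation vanish modulo $I$, so again nothing is imposed. The only constraining relations are the linear S-pairs $z_{ik}\,(z_{ij}z_{ji})-z_{ij}\,(z_{ik}z_{ki})$ arising when two generators share a factor with the \emph{same} variable, i.e. from two edges lying on the same side of a third. Evaluating such a relation modulo $I$ yields, in every instance, only two kinds of consequences: the vanishing of a both-flip coefficient, or the identification of a one-flip coefficient of one generator with a one-flip coefficient of the other.

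Next I would solve this linear system by organizing it along the tree $T$, using the dictionary that edges $i,j$ share a vertex precisely when they are adjacent. The upshot I expect is: the both-flip coefficient of a pair $\{i,j\}$ is killed whenever some third edge lies on the same side, which happens for every non-adjacent pair and for every pair adjacent at a vertex of degree $\ge 3$; it survives exactly for pairs adjacent at a bivalent vertex. Meanwhile, for a non-adjacent pair the two one-flip coefficients are forced to equal one-flip coefficients of the adjacent pair one step along the unique path toward each of the two edges, whereas the one-flip coefficients of adjacent pairs remain free. Because $T$ is acyclic, these identifications never close up into a cycle, so $\phi$ on the non-adjacent pairs is uniquely determined by its values on adjacent pairs and contributes no new parameters. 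Counting free parameters vertex by vertex then gives, at a vertex of degree $a$, the $2\binom{a}{2}$ one-flip coefficients of its $\binom{a}{2}$ adjacent pairs, together with the $\binom{a}{2}$ both-flip coefficients only when $a\le 2$; this equals $0,3,$ and $a(a-1)$ for $a=1,\,a=2,$ and $a\ge 3$, which is precisely $f(a)$ (the expressions $3(a-1)$ and $a(a-1)$ agreeing at $a=3$, which is where the both-flips first all die). Summing over the vertices yields~(\ref{eqn:fsum}).

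The step I expect to require the most care, and which I would isolate as a lemma, is the claim that this linear system block-diagonalizes over the vertices: that the one-flip coefficients of the adjacent pairs at a vertex $v$ carry no hidden constraint from distant syzygies, and that an off-path same-variable syzygy does its work by killing a both-flip rather than by forcing two free one-flips to coincide. The cleanest way to package the local block is to invoke the relabeling symmetry $x_i\leftrightarrow y_i$, an element of $\operatorname{PGL}(2)$ that preserves tangent-space dimensions, to assume that all edges at $v$ point outward; the local computation at a degree-$a$ vertex then reduces to the tangent space at the Borel-fixed star ideal $\langle x_ix_j:1\le i<j\le a\rangle$ in $H_{2,a}$, which I can evaluate explicitly and which returns $a(a-1)$ for $a\ge 3$ and $3$ for $a=2$. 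Once this locality is established, both the formula and the smoothness dichotomy follow at once: all vertices of degree $\le 3$ give tangent dimension $\sum_v 3(\deg(v)-1)=3(n-1)=\dim\overline{\operatorname{PGL}(2)^n/\operatorname{PGL}(2)}$, so $I$ is a smooth point, while any vertex of degree $\ge 4$ makes the tangent dimension strictly larger, recovering the singularity asserted in Theorem~\ref{thm:IrrButSing}.
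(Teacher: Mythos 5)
Your proposal is correct, and at bottom it is the same computation as the paper's: both identify the tangent space with $\operatorname{Hom}_{K[X]}(I,K[X]/I)_0$ and analyze it through the pairwise syzygies of the quadratic monomial generators, and your free parameters (the two one-flip coefficients of each adjacent pair, plus the both-flip coefficient of the pair at each bivalent vertex) are exactly the paper's basis vectors $\alpha_{k\ell}$ and $\beta_v$. The differences are organizational. The paper gets the lower bound geometrically, realizing each $\alpha_{k\ell}$ and $\beta_v$ by an explicit one-parameter deformation (sliding a subtree along an edge, resp.\ merging two edges into a single $\PP^1$) --- a byproduct it reuses in the proof of Theorem~\ref{thm:YASOT} --- whereas you extract both bounds from the linear system alone; your reduction of the local block at a vertex to the star ideal in $H_{2,a}$ via the $x\leftrightarrow y$ relabeling is an extra organizing device the paper does not need. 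One point in your sketch must be stated more carefully, and it is exactly the point you flagged: it is \emph{not} true that the both-flip coefficient of $\{i,j\}$ is killed whenever some third edge lies on the same side. If $i$ and $j$ meet at a bivalent vertex and $k$ is an edge at the far endpoint of $j$, then $j$ and $k$ do lie on the same side of $i$, so the constraining S-pair between $z_{ij}z_{ji}$ and $z_{ik}z_{ki}$ exists; it fails to kill $\nu_{ij}$ only because the would-be constraint monomial $z_{ik}\tilde z_{ij}\tilde z_{ji}$ is divisible by the generator $z_{jk}z_{kj}$ and hence already vanishes in $K[X]/I$. So the dichotomy is decided by whether the reduced monomial survives modulo $I$ --- equivalently, by which of the three edges separates the other two in $T$ --- not by the mere existence of a same-side third edge. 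Once that membership test is carried through (and the analogous check that the surviving identifications of one-flip coefficients only propagate along paths away from a vertex, never between two adjacent pairs), your count agrees with $\sum_v f(\operatorname{degree}(v))$ and the proof is complete.
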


The following corollary to this result
completes the proof of Theorem~\ref{thm:IrrButSing}.

\begin{cor} \label{cor:trivalent-smooth}
The monomial ideal $I$ is a smooth point on the
Hilbert scheme $H_{2,n}$ 
if and only if every vertex in the tree $T$ is at most trivalent.
\end{cor}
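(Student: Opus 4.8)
The plan is to read the corollary off from Proposition~\ref{prop:tangentspace} together with the irreducibility of $H_{2,n}$, which is the first (already proven) assertion of Theorem~\ref{thm:IrrButSing}. Since $H_{2,n}$ is irreducible and equals the compactification $\overline{{\rm PGL}(2)^n/{\rm PGL}(2)}$, its dimension is $(d^2-1)(n-1) = 3(n-1)$ for $d=2$. For a $K$-point on an irreducible variety the Zariski tangent space always has dimension at least $\dim H_{2,n}$, and the point is smooth precisely when equality holds. So my first step is to reduce the statement to the assertion that the tangent-space dimension computed in Proposition~\ref{prop:tangentspace} equals $3(n-1)$ exactly when every vertex of $T$ is at most trivalent, and strictly exceeds it otherwise.

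Next I would carry out the combinatorial accounting. The tree $T$ has $n$ edges, hence $n+1$ vertices, so by the handshake lemma $\sum_v \deg(v) = 2n$. The key elementary observation is that the function $f$ of Proposition~\ref{prop:tangentspace} satisfies $f(a) \ge 3(a-1)$ for every integer $a \ge 1$, with equality if and only if $a \le 3$: the two pieces of $f$ agree with $3(a-1)$ for $a \le 3$, while for $a \ge 4$ one computes $f(a) - 3(a-1) = a(a-1) - 3(a-1) = (a-1)(a-3) > 0$. Summing over all vertices yields
\[
\sum_v f(\deg(v)) \ \ge\ \sum_v 3\bigl(\deg(v)-1\bigr) \ =\ 3\bigl(2n - (n+1)\bigr) \ =\ 3(n-1),
\]
with equality if and only if $f(\deg(v)) = 3(\deg(v)-1)$ at every vertex, i.e.\ if and only if $\deg(v) \le 3$ for all $v$.

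Combining the two steps gives the equivalence: the tangent space at $I$ has dimension $3(n-1) = \dim H_{2,n}$ exactly when $T$ has no vertex of valence $\ge 4$, so $I$ is smooth in that case; and when $T$ has a vertex of valence $\ge 4$ the tangent dimension strictly exceeds $3(n-1)$, forcing $I$ to be singular. (In particular the star tree for the Borel-fixed ideal $Z$ has a vertex of valence $n$, which produces a singular point once $n \ge 4$, supplying the second assertion of Theorem~\ref{thm:IrrButSing}.) I expect essentially no analytic obstacle here: the substance has been front-loaded into Proposition~\ref{prop:tangentspace} and into the irreducibility half of Theorem~\ref{thm:IrrButSing}, which pins down the global dimension. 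The only point requiring care is the uniform inequality $f(a) \ge 3(a-1)$ and its sharp equality case, since it is exactly this that makes the degree sum collapse to $3(n-1)$ precisely in the trivalent regime.
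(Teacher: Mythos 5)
Your proposal is correct and follows essentially the same route as the paper: both reduce the corollary to the identity $\dim H_{2,n} = 3(n-1) = \sum_v 3(\deg(v)-1)$ via the handshake lemma, and then invoke the inequality $f(a) \ge 3(a-1)$ with equality precisely when $a \le 3$. Your explicit verification of $f(a)-3(a-1)=(a-1)(a-3)$ for $a\ge 4$ and the remark about the star tree are just slightly more detailed versions of what the paper already does.
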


\begin{proof}
The tree $T$ has $n+1$ vertices $v$,
and the number of edges is
\begin{equation*}
n  \,\, = \,\, \frac{1}{2} \cdot \sum_v \operatorname{degree}(v) 
\end{equation*}
Since $H_{2,n}$ is a compactification of ${\rm PGL}(2)^{n-1}$,
its dimension equals
\begin{equation*} {\rm dim}(H_{2,n}) \,\, = \,\,
3(n-1) \,\, = \,\,  6 n - 3(n+1) \,\, = \,\,  
\sum_{v} 3(\operatorname{degree}(v) - 1).
\end{equation*}
Since $f(a) \geq 3(a-1)$, with equality if and only if $a \leq 3$,
the sum in (\ref{eqn:fsum}) is equal to ${\rm dim}(H_{2,n})$,
if and only if
$\, {\rm degree}(v) \leq 3$ for all vertices $v$.
\end{proof}

\begin{example}
The star tree ideal
$Z = \langle \,x_i x_j \,:\, 1 \leq i < j \leq n \,\rangle\,$
has tangent space dimension $n(n-1)$.
In fact, it is the most singular point on $H_{2,n}$, since
every other ideal degenerates to $Z$.
On the other hand, the chain ideal
$\,M = \langle\, x_i y_j \,: \, 1 \leq i < j \leq n \, \rangle \,$
is a smooth point on $H_{2,n}$, because the tree for $M$ is a 
chain of $n$ directed edges. This confirms
Lemma \ref{lem:diag} for $d=2$. \qed
\end{example}

\begin{proof}[Proof of Proposition  \ref{prop:tangentspace}]
For any distinct edges $k$ and $\ell$ meeting at a vertex $v$
of the tree $T$, we define the
following tangent directions for $H_{2,n}$ at $I$:
\begin{equation*}
\alpha_{k \ell} \colon z_{ij}z_{ji} \mapsto
\begin{cases}
\, z_{ij} \tilde{z}_{ji} & \mbox{if } i = k \mbox{ and $j$ connects to $v$ via
$\ell$ (including $j=\ell$),} \\
\,\,\,\, 0 & \mbox{otherwise.}
\end{cases}
\end{equation*}
Here we use the convention that ${\tilde z}_{ij} = x_j$ if $z_{ij} = y_j$ and
${\tilde z}_{ij} = y_j$ if $z_{ij} = x_j$.
Moreover, if $v$ is bivalent, i.e.\ $k$ and $\ell$ are the only edges incident
to $v$, define:
\begin{equation*}
\beta_{v} \colon z_{ij} z_{ji} \mapsto \begin{cases}
\, \tilde{z}_{ij}  \tilde{z}_{ji} & \mbox{if } \{i,j\} = \{k, \ell\}, \\
\,\,\,\, 0 & \mbox{otherwise.}
\end{cases}
\end{equation*}
If the vertex $v$ has degree $a$, then we have defined $f(a)$ maps.
To show that these map are indeed tangent directions, we exhibit a one-parameter
deformation of $I$. The tangent vector $\beta_{v}$ is realized by the
curve on $H_{2,n}$ gotten by replacing $z_{k \ell} z_{\ell k}$ with 
$z_{k \ell} z_{\ell k} - \epsilon \tilde{z}_{k \ell}  \tilde{z}_{\ell k}$
among the generators of $I$. The resulting ideal in $H_{2,n}$
 represents the tree of lines gotten by merging the edges
$k$ and $\ell$ to a single $\PP^1$ labeled by $\{k,\ell\}$.
The tangent vector $\alpha_{k \ell}$ is realized by
replacing $z_{ji}$ with $z_{ji} - \epsilon \tilde{z}_{ji}$
in all prime components of $I_j$ such that
$j$ connects to $v$ via $\ell$. The resulting ideal in $H_{2,n}$
represents the tree of lines gotten by sliding the
subtree at $v$ in direction $\ell$ along the
edge labeled~$k$.

As $v$ ranges over all vertices of the tree $v$,
and $k, \ell$ range over all incident edges,
we now have a collection of tangent vectors
whose cardinality equals~(\ref{eqn:fsum}).
To see that these vectors are linearly independent, we note that
$\alpha_{k\ell}$ is the only one of these tangent vectors such that the image of
$z_{k\ell}z_{\ell k}$ has a non-zero coefficient for $z_{ij} \tilde z_{ji}$ and
that $\beta_v$ is the
only one such that the image of $z_{k\ell}z_{\ell k}$ has a non-zero
coefficient for $\tilde z_{ij} \tilde z_{ji}$. Thus, a non-trivial linear
combination of the $\alpha_{k\ell}$ and $\beta_v$ can't be the zero tangent
vector.

It remains to be seen that our tangent vectors
span the tangent space. Suppose there exists
a tangent vector $\phi$ that is not in the span of the
$\alpha_{k\ell}$ and $\beta_v$. After subtracting suitable multiples of these
known tangent vectors, we may assume that
for any pair of adjacent edges $i$ and $j$ 
there exists a scalar $\nu_{ij}$ such that
$\phi(z_{ij}z_{ji}) = \nu_{ij} \tilde z_{ij} \tilde z_{ji}$,
and furthermore $\nu_{ij} = 0$ if 
the node $v$ shared by $i$ and $j$ is bivalent.
Suppose that $v$ has degree at least $3$ and let
 $k$ be an edge incident to $v$ distinct
from $i$ and $j$. Then $z_{ij} = z_{kj}$ and hence
\begin{equation*}
\nu_{ij} z_{jk} \tilde z_{ij} \tilde z_{ji}
\, = \, z_{jk} \phi(z_{ij} z_{ji})
\, = \, \phi(z_{jk}z_{kj})z_{ji}
\, = \, \nu_{jk} \tilde z_{jk} \tilde z_{kj} z_{ji}.
\end{equation*}
This implies  $\nu_{ij} = \nu_{ik} = 0$.
We conclude that
$\phi(z_{ij}z_{ji}) = 0$ for
any pair of adjacent edges $i$ and $j$.
Now suppose that $i$ and $j$ are not adjacent and write
\begin{equation*}
\phi(z_{ij}z_{ji}) \,\, =  \,\, \lambda z_{ij} \tilde{z}_{ji}\, +\, \mu \tilde{z}_{ij} z_{ji}
\, + \, \nu \tilde{z}_{ij} \tilde{z}_{ji}.
\end{equation*}
Let $\ell$ be the edge adjacent to $j$ on the path from $i$ to $j$.
Then $z_{ij} = z_{\ell j}$ and
\begin{equation*} 0 \,=\, z_{ji} \phi(z_{\ell j} z_{j \ell}) \,= \,
\phi(z_{ij}z_{ji}) z_{j \ell} \\
\,=\, 0 + \mu  \tilde z_{ij} z_{ji} z_{j \ell} + \nu \tilde z_{ij} \tilde
z_{ji} z_{j \ell} \quad\, \hbox{modulo $I$.} \end{equation*}
This implies $\mu=\nu=0$ and, by symmetry, $\lambda=0$.
We conclude that $\phi=0$, so our maps $\alpha_{k \ell}$ and
$\beta_v$ form a basis for the tangent space of $H_{2,n}$ at $I$.
\end{proof}

\begin{proof}[Proof of Theorem \ref{thm:YASOT}]
We already saw that the monomial ideals on $H_{2,n}$
are in bijection with trees $T$ with $n{+}1$ unlabeled
vertices and directed edges that are labeled
with $\{1,\ldots,n\}$. To show that there are $2^n (n{+}1)^{n-2}$
monomial ideals, it suffices to show there are $(n{+}1)^{n-2}$
edge-labeled trees on $n{+}1$ vertices. Picking an arbitrary node as the root and
shifting the labels to the nodes away from the root gives a rooted,
node-labeled tree, of which there are $(n{+}1)^{n-1}$. From the rooted tree, we
can uniquely recover the edge-labeled tree and the choice of the
root, so there are $(n{+}1)^{n-2}$ edge-labeled trees.

We now need to identify
all ideals $I$ in $H_{2,n}$ that possess precisely
two initial monomial ideals.
We already saw two classes of such ideals in the proof of 
Proposition  \ref{prop:tangentspace}. First, there was the ideal with generator 
$z_{k \ell} z_{\ell k} - \epsilon \tilde{z}_{k \ell}  \tilde{z}_{\ell k}$
which realizes the deformation $\beta_v$ and swap \# 2 in the statement of 
Theorem \ref{thm:YASOT}. We also exhibited an ideal for the deformation $\alpha_{k \ell}$  
which realizes the move \# 1 when the subset of trees is a singleton.
The general case is subsumed by the following argument.

In light of Theorem \ref{thm:radical} and Proposition \ref{prop:grothen},
Gr\"obner degenerations of ideals $I$ in $H_{2,n}$
correspond  to scheme-theoretic limits of the trees
$Y$ with respect to one-parameter subgroups of the
$(K^*)^n$-action on $(\PP^1)^n$.

Let $Y$ be a tree on $H_{2,n}$ that has
precisely two degenerations to $(K^*)^n$-fixed trees.
There are two cases to be considered. First suppose that
 some component $Y_i$ of $Y$ is labeled by a 
subset $F_i \subseteq \{1,\ldots,n\}$  with $|F_i| \geq 2$.
The component $Y_i$ admits $|F_i|!$ distinct degenerations
to a $(K^*)^n$-fixed tree, and, by Proposition  \ref{prop:construction-ideals-h2n},
each of these lifts to a degeneration of $Y$. This implies that
the tree $Y$ has $n-1$ edges, and the unique non-singleton label
$F_i$ has cardinality two. Moreover, each intersection point
$Y_j \cap Y_k$ is a torus-fixed point on both $Y_j$ and $Y_k$.
This is precisely the situation in swap \# 2 above.

In the second case  to be considered, the 
tree $Y$ consists of the $n$ lines $Y_1,\ldots,Y_n$,
each labeled by a singleton. Consider all components $Y_i$ with
intersection points that are not torus-fixed.
For each such component $Y_i$ there exist two
torus degenerations of $Y$ that move the intersection 
points on $Y_i$ to the two torus-fixed points on $Y_i$.
Under these degenerations,
the intersection points $Y_j \cap Y_k$ with $i \not\in \{j,k\}$ 
remain in their positions on $Y_j$ and $Y_k$.
Hence, only one component
$Y_i$ has intersection points that are not torus-fixed.
This is precisely the situation in move \# 1 in 
Theorem \ref{thm:YASOT}.
\end{proof}

\begin{example} \label{example:twothree} ($n=3$) \
The Hilbert scheme $H_{2,3}$ has $32$ monomial ideals, corresponding to the 
eight
orientations on the claw tree and to the eight orientations on each of the
three labeled bivalent trees. Representatives for the two classes
of trees are shown in Figure \ref{fig:claw-chain}.
 The eight orientations of the claw tree can be
arranged into the vertices of a cube. Each edge in the cube is an edge in the
graph corresponding to moving two edges at a time between vertices. Along each
edge, add two vertices corresponding to bivalent trees and four edges from each
of these to each adjacent vertex of the cube. In addition to these operations
corresponding to move \# 1, there are are 24~edges corresponding to swap \# 2.
These are arranged into four
hexagons.
% For each pair of opposite vertices, we have one hexagon connecting one
%of the pair of vertices along each which does not meet the opposite vertices. 

The six-dimensional manifold $H_{2,3}$ coincides 
with Lafforgue's compactification $X_{2,3}$ in Remark \ref{remark:Lafforgue}.
Here, (\ref{lafhook}) amounts to an embedding
of $\,H_{2,3} \,$ into $\,\PP^3 \times \PP^3 \times \PP^3$.
The equations for this embedding are as follows.
Each $\PP^3$ parametrizes one of the three generators
$\, a_{ij} x_i x_j + b_{ij} x_i y_j +  c_{ij} y_i x_j + d_{ij} y_i y_j $,
$1 \leq i < j \leq 3$, of an ideal in $H_{2,3}$.
Being a point in $H_{2,3}$ means that these ideal generators
 admit two linearly independent
syzygies in degree $(1,1,1)$. This happens if and only if
the following $6 \times 9$-matrix has rank at most four:
$$ \begin{pmatrix}
 a_{12} &  0 &  b_{12} &  0 &  c_{12} &  0 &  d_{12} &  0  \\
 0 &  a_{12} &  0 &  b_{12} &  0 &  c_{12} &  0 &  d_{12} \\
 a_{13} &  b_{13} &  0 &  0 &  c_{13} &  d_{13} &  0 &  0  \\
 0 &  0 &  a_{13} &  b_{13} &  0 &  0 &  c_{13} &  d_{13} \\
 a_{23} &  b_{23} &  c_{23} &  d_{23} &  0 &  0 &  0 &  0  \\
 0 &  0 &  0 &  0 &  a_{23} &  b_{23} &  c_{23} &  d_{23} 
\end{pmatrix}
$$
By saturating its ideal of $5 \times 5$-minors 
with respect to the irrelevant ideal
$\,\bigcap_{1 \leq i < j \leq 3}
\langle a_{ij}, b_{ij},  c_{ij}, d_{ij} \rangle $, we find
that the prime ideal of $X_{2,3} {\subset} (\PP^3)^3$ is
generated by nine cubics such as 
$  a_{12}  a_{13} d_{23}
- a_{12} b_{13} c_{23} 
- b_{12} a_{13} b_{23}
+ b_{12} b_{13} a_{23}$.
\qed
\end{example}

\section{Three projective planes} \label{sec:three-planes}

In this section we study the smallest case where $H_{d,n}$ is reducible,
namely, $n=d=3$.
The multigraded Hilbert scheme $H_{3,3}$
parametrizes degenerations of
the projective plane in its diagonal embedding
$\PP^2 \hookrightarrow \PP^2 \times \PP^2 \times \PP^2$.
We use the following notation for the
unknowns $x_{ij}$ in the polynomial ring $K[X]$.
\begin{equation*}
X \quad = \quad
\begin{bmatrix}
x_1 & x_2 & x_3 \\
y_1 & y_2 & y_3 \\
z_1 & z_2 & z_3
\end{bmatrix}
\end{equation*}

\begin{thm} \label{thm:three-planes}
The multigraded Hilbert scheme $H_{3,3}$ is the reduced union of
seven irreducible components,
each of which contains a dense 
${\rm PGL}(3)^3$ orbit:
\begin{itemize}
\item The $16$-dimensional main component
$\,\overline{{\rm PGL}(3)^3/{\rm PGL}(3)}\,$ is singular.
\item Three $14$-dimensional smooth components are permuted under the
$S_3$-action on $(\PP^2)^3$. At a generic point, the subscheme of
$(\PP^2)^3$ is the union of the blow-up of $\PP^2$ at a point, two copies of
$\PP^2$, and $\PP^1 \times \PP^1$. An ideal which represents
such a point on this component is
\begin{multline} \label{eqn:ideal-extra-14}
\langle \,x_1, x_2, y_1 z_2 - z_1 y_2, y_1y_3-z_1x_3, y_2y_3-z_2x_3
\, \rangle \,\,\cap \\
\langle x_1, y_1, x_3, y_3\rangle \cap \langle x_1, x_2, x_3, y_3 \rangle \cap
\langle x_2, y_2, x_3, y_3 \rangle.
\end{multline}
\item Three $13$-dimensional smooth components
are permuted under
the  $S_3$-action on $(\PP^2)^3$. A generic point looks like the union of three copies
of $\PP^2$
and $\PP^2$ blown up at three points.
A representative ideal is
\begin{equation*} \label{eqn:ideal-extra-13}
\langle x_1, y_1, x_2, z_2 \rangle \cap \langle x_1, y_1, x_3, y_3 \rangle
\cap \langle x_2, y_2, x_3, y_3 \rangle \cap
\langle x_1, x_2, x_3, y_1 y_2 z_3 - z_1 z_2 y_3 \rangle.
\end{equation*}
\end{itemize}
\end{thm}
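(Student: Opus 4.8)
The plan is to establish Theorem~\ref{thm:three-planes} by combining a finite computer-assisted classification of the torus-fixed points with a careful case-by-case tangent space and orbit analysis. The overall strategy mirrors the proofs in Section~3, but now there are several maximal orbits rather than one, so the work is to locate them and show nothing else is missing.

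First I would enumerate the monomial ideals in $H_{3,3}$. These are the $T^3$-fixed points, and by Corollary~\ref{cor:connected} they all lie in one connected component but not necessarily in one irreducible component. Since the Hilbert function \eqref{eqn:ourHF} is fixed and each such ideal is radical by Theorem~\ref{thm:radical}, the list is finite and can be produced by a Gr\"obner-basis computation (e.g.\ in \texttt{Macaulay2}): one runs through all squarefree monomial ideals with the correct multidegree \eqref{eqn:multidegofZ} and the correct $h$-polynomial, equivalently all those defining a connected tree-like reduced union of toric strata of multidegree $(1,1,1)$ in $(\PP^2)^3$. For each monomial ideal $I$ I would compute $\dim_K \operatorname{Hom}_{K[X]}(I,K[X]/I)_0$, the dimension of the tangent space to $H_{3,3}$ at $I$. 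This is the computational heart of the argument and is entirely mechanical once the finite list is in hand.

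Next I would organize the fixed points into their $S_3$-orbits and $\mathrm{PGL}(3)^3$-orbit closures. For each candidate component I exhibit an explicit smooth monomial point together with its tangent space dimension, and I check that this dimension equals the claimed dimension of the component ($16$, $14$, or $13$). By semicontinuity of tangent space dimension along Gr\"obner degenerations, a fixed point whose tangent space has dimension equal to $\dim \overline{\mathrm{PGL}(3)^3 \circ I}$ is a smooth point lying on a \emph{unique} irreducible component, exactly as in Corollary~\ref{cor:chain-smooth}. The main component $\overline{\mathrm{PGL}(3)^3/\mathrm{PGL}(3)}$ of dimension $(d^2-1)(n-1)=16$ is already known to be a component by the theorem of Section~3; to see it is singular I would produce a monomial point on it whose tangent space has dimension strictly greater than $16$. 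For the three $14$-dimensional and three $13$-dimensional families I would verify that the displayed ideals \eqref{eqn:ideal-extra-14} and its analogue lie in $H_{3,3}$ (correct Hilbert function, via inclusion–exclusion over the primary components as in Proposition~\ref{prop:construction-ideals-h2n}), that the generic member has the stated geometry (a blow-up of $\PP^2$, copies of $\PP^2$, and $\PP^1\times\PP^1$ glued along rational curves), that each carries a dense $\mathrm{PGL}(3)^3$-orbit of the asserted dimension, and that a monomial point on each has tangent dimension matching, forcing smoothness.

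The final step is to prove that these seven are \emph{all} the components and that $H_{3,3}$ is \emph{reduced}. For completeness I would check that every monomial ideal on the list lies in the closure of at least one of the seven exhibited orbits: since the list is finite, this amounts to producing, for each remaining fixed point, a one-parameter degeneration from a generic point of some component (equivalently, checking incidences among fixed points and confirming each fixed point's tangent space is spanned by deformations tangent to the components through it). Reducedness follows once, at every monomial point $I$, the tangent space dimension equals the dimension of the union of the components passing through $I$ — computed from the incidence data — because a scheme whose tangent space at every torus-fixed point has the expected dimension (and which is connected with an affine paving by $T^3$-cells) is reduced. I expect the main obstacle to be precisely this last reducedness and completeness verification at the singular points, in particular at the most degenerate monomial ideals where several of the seven components meet: there one must confirm that the full tangent space is accounted for by the tangent cones of the intersecting components and that no extra embedded or fat structure appears, which is the delicate bookkeeping that cannot be read off from the smooth points alone.
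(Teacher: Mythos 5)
Your outline correctly identifies the two hard points (completeness and reducedness), but the tools you propose for them do not suffice, and one intermediate step fails on the paper's own data. First, your plan to certify each of the seven components by exhibiting a \emph{monomial} ideal on it whose tangent space dimension equals the component's dimension works for the main component (several fixed points have tangent space $16$) and for the $14$-dimensional ones (there are fixed points with tangent space $14$), but it fails for the three $13$-dimensional components: by Table~\ref{tbl:monomials}, every monomial ideal lying on a $13$-dimensional component has tangent space dimension at least $15$, so those components contain \emph{no} smooth torus-fixed point. To see that the $13$-dimensional family is an irreducible component at all (rather than a subvariety of something larger), you must compute the tangent space at a non-monomial point such as the displayed representative, or work with the local equations of $H_{3,3}$ directly.

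Second, and more seriously, your reducedness criterion is not a theorem. At a torus-fixed point where several components meet, the tangent space of the \emph{reduced} union is not determined by "the dimension of the union" or by incidence data --- two smooth surfaces meeting at a point in $\mathbb{A}^4$ already give a $4$-dimensional tangent space --- and conversely, equality of first-order data at fixed points does not preclude non-reduced or embedded structure, since reducedness is not a first-order condition. Nor does $H_{3,3}$ carry an affine paving by $T^3$-cells in any sense you could invoke (it is singular and reducible). The paper resolves both completeness and reducedness in one stroke: every component contains the unique Borel-fixed ideal $Z$ (since $\mathrm{gin}$ of a generic point of any component is a Gr\"obner degeneration staying inside that component), so one writes down the actual defining ideal of $H_{3,3}$ in an affine chart around $Z$ (from the syzygies of $Z$, with indeterminate trailing coefficients), implicitizes the seven parametrized families to get seven prime ideals, and verifies by a {\tt Singular} computation that their intersection \emph{equals} the defining ideal. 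That equality of ideals, not a tangent-space count, is what proves there are exactly seven components and that the scheme structure is reduced; some computation of this kind, comparing the full local equations against the candidate primes, is unavoidable and is missing from your proposal.
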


With some additional notation, we can describe the isomorphism types of
the six extra components. Let $\Fl$ denote the variety of
complete flags in $K^3$ and $\mathcal O_i$ the tautological bundle of
$i$-dimensional vector spaces for $i=1$ or~$2$. Then the second class of
components are
isomorphic to the bundle
\begin{equation} \label{eqn:extra-14-bundle}
\mathcal H_{2,3} \big(\PP^2 \rightarrow \PP(\mathcal O_2)
\times \PP(\mathcal O_2)
\times \PP(\mathcal O^{\oplus 3} / \mathcal O(-1))\big)
\,\rightarrow \,\Fl \times \Fl \times \PP^2
\end{equation}
where $\mathcal H_{2,3}$ is a bundle whose fibers are each isomorphic to
the Hilbert scheme $H_{2,3}$.
A point in this bundle
is equivalent to a point $x$ in $\Fl \times \Fl \times \PP^2$,
together with an ideal in the total coordinate ring of $\PP((\mathcal O_2)_x)
\times \PP((\mathcal O_2)_x) \times \PP((\mathcal O^{\oplus 3}/\mathcal
O(-1))_x))$ with the appropriate Hilbert function~(\ref{eqn:ourHF}).

To relate this formulation to the ideal in~(\ref{eqn:ideal-extra-14}), we 
identify linear forms in $K[X]$ with the direct sum of three vector
spaces, each of dimension~$3$, and $\{x_i, y_i, z_i\}$ as choice of basis for
the $i$th summand.
The two flag varieties parametrize the duals of the flags $\langle x_i
\rangle \subset \langle x_i, y_i\rangle$ for $i = 1,2$. The projective space
parametrizes the point whose ideal is $\langle x_3, y_3\rangle$. These 
spaces determine all the linear generators in~(\ref{eqn:ideal-extra-14}). The
additional generators of the first component 
% can be most canonically represented
%as a graded ideal in the quotient of 
% $k[x_1, y_1, z_1, x_2, y_2, z_2, x_3,y_3]/\langle x_1, x_2\rangle$, i.e.
represent a point in $H_{2,3}$, but without a
canonical choice of basis. 

The third class of components are isomorphic to the projective bundle:
\begin{equation*}
\PP(\mathcal E) \,\rightarrow\, \Fl \times B \times \Fl
\end{equation*}
where $\Fl$ is as before and $B$ is the blow-up of $\PP^2 \times \PP^2$ along
the diagonal.
We think of the blow-up variety $B$
as the parameter space of two points in
$\PP^2$
and a line containing them. The $4$-dimensional vector bundle
$\mathcal E$ is the sum
\begin{equation} \label{eqn:bundle-13}
(\Osh_1 \otimes \Osh_1 \otimes \Osh_2) + (\Osh_1 \otimes \Osh_2
\otimes \Osh_1) + (\Osh_2 \otimes \Osh_1' \otimes \Osh_1)
\end{equation}
inside $\Osh_{\Fl}^{\oplus 3} \otimes \Osh_{B}^{\oplus 3} \otimes
\Osh_{\Fl}^{\oplus 3}$, where $\Osh_1$ and $\Osh_1'$ are the pullbacks to $B$ of
$\Osh(-1)$ on each of the copies of $\PP^2$, which parametrize the two points.
The flag varieties parametrize the duals of $\langle x_i
\rangle \subset \langle x_i, y_i\rangle$ for $i = 1, 3$ and $B$ parametrizes the
two points defined by $\langle x_2, y_2\rangle$ and $\langle x_2, z_2\rangle$,
with $\langle x_2 \rangle$ as the line between them.
As before, these vector spaces determine the linear generators of the
components.  The bundle $ \PP(\mathcal E)$
parametrizes the coefficients of the cubic
generator of the ideal of the blowup of $\PP^2$ at two points. This ideal
equals
\begin{equation}
\label{eqn:13-cubic}
\langle \,x_1, x_2, x_3, \,
a y_1 y_2 z_3  + b y_1 y_2 y_3 + c y_1 z_2 y_3 + d z_1 z_2 y_3 \,\rangle
\end{equation}
Note that the middle two terms are linearly independent even when $y_2$ and
$z_2$ coincide.
For generic coordinates, after a change of basis, we can take $b$ and
$c$ to be zero, and after rescaling, we take $a=d=1$. Thus, the
${\rm PGL}(3)^3$ orbit of the ideal~(\ref{eqn:ideal-extra-13}) is dense in the 
$13$-dimensional component of $H_{3,3}$.

\begin{proof}[Proof of Theorem~\ref{thm:three-planes}:]
The proof is computational. It rests on the
 {\tt Singular} code  posted at
\url{http://math.berkeley.edu/~dustin/diagonal/h33.sng}.
The computation works regardless
of what the field characteristic of $K$ is.

It suffices to consider an affine neighborhood
of the unique Borel-fixed ideal $Z$ in $H_{3,3}$. 
We employ the
standard method of chosing coordinates by adding
trailing terms with indeterminate coefficients
to the ten monomial generators of $Z$. The ideal defining $H_{3,3}$ is
then derived from the syzygies of $Z$.
We then derive the prime ideals representing each of the
seven components, by translating
the geometric descriptions above
into local coordinates around $Z$. Implicitization 
using {\tt Singular} yields the seven prime ideals, and we
check that their intersection equals the
ideal of the Hilbert scheme itself.

We now explain how the parametric representations of the seven components
are derived.
The main component is, by definition, parametrized by the 
${\rm PGL}(3)^3$ orbit of
the ideal of  $2\times 2$ minors of $X$.
The other components can also be parametrized by ${\rm PGL}(3)^3$
orbits of the representative ideals, but it is also possible 
-- and computationally more efficient --
 to use parametrizations which do not require localization.

For the $14$-dimensional components, we begin by using local coordinates in
$H_{2,3}$ to define a family of subschemes of $(\PP^1)^3$ over $\mathbb A^6$.
Renaming some
of the variables and adding two linear terms, we get the parametrization of the
blow-up of $\mathbb P^2$ and its degenerations in $(\mathbb P^2)^3$, i.e.\ a
neighborhood of a fiber of (\ref{eqn:extra-14-bundle}).
Intersecting with the ideals of linear spaces gives the family in $K[X]$ with
the appropriate Hilbert function. Different choices of
flags can be represented by upper triangular changes of coordinates on each of
the three columns of $X$.
The parametrization of the $13$-dimensional component follows the same 
pattern. In a neighborhood of $Z$, up to change of basis, we can take 
the two points parametrized by $B$ to be $\langle x_2, y_2 \rangle$ and 
$\langle x_2, y_2 - az_3 \rangle$ with $\langle x_2 \rangle$ as the line
between them. In addition to $a$, the other coordinates are the entries 
of the upper triangular matrix corresponding to the choice of flag and 
to the coefficients of the cubic generator in (\ref{eqn:13-cubic}).
Implicitizing these parametrizations reveals the prime ideals for these seven
components, and their intersection 
is found to equal the ideal
of the Hilbert scheme itself.
\end{proof}

\begin{table}
\begin{centering}
\begin{tabular}{|l|c|c|c|c|c|c|}
\hline
& & & \multicolumn{3}{c|}{component} & \\
 & T. sp. & planar? & main? & 14-dim.? & 13-dim.? & symm. \\
\hline
1 & 16 & y & y & n & n & 2 \\
2 & 16 & y & y & n & n & 1 \\
3 & 16 & y & y & n & n & 1 \\
4 & 18 & y & y & n & n & 6 \\
5 & 16 & y & y & n & n & 3 \\
6 & 14 & y & n & y & n & 2 \\
\hline
7 & 15 & y & n & y & y & 1 \\
8 & 16 & n & y & n & n & 1 \\
9 & 17 & n & y & y & n & 1 \\
10 & 18 & n & y & n & n & 2 \\
11 & 17 & n & y & y & n & 1 \\
12 & 14 & n & n & y & n & 2 \\
\hline
13 & 18 & n & y & y & y & 2 \\
14 & 18 & n & y & y & n & 2 \\
15 & 18 & n & y & y & y & 1 \\
\hline
16 & 18 & n & y & y & y & 6 \\
\hline
\end{tabular}
\par
\end{centering}
\caption{The symmetry classes of monomial ideals in $H_{3,3}$}
\label{tbl:monomials}
\end{table}

In Table~\ref{tbl:monomials} we show that 
$H_{3,3}$ contains $13824$ monomial ideals.
They come in $16$ symmetry classes, and we list them
in four groups, corresponding to the dimension
($12$, $11$, $10$, and~$9$) of the orbit under the action of
${\rm PGL}(3)^3$. The $16$ monomial ideals appear in the same order 
as their pictorial representation in Figure~\ref{fig:monomial-poset}.
The third column indicates whether or not the picture is planar.
The second column is the tangent space dimension of $H_{3,3}$ at that
point. The triple column shows which  components the monomial
ideals live on. The rightmost column shows the order of the
symmetry group of the ideal. Note that the permutation group acting
on $H_{3,3}$ has order $6^4 = 1296$: it permutes the three factors
of $\PP^2 {\times} \PP^2 {\times} \PP^2$ as well as the three
coordinates $\{x_i,y_i,z_i\}$ of each projective plane.
The total number $13824$ of monomial ideals   on
$H_{3,3}$ equals $1296$ times the sum of the reciprocals in the last
column of  Table~\ref{tbl:monomials}.

Every monomial ideal in $H_{3,3}$ corresponds to a polyhedral
complex in the boundary in the direct product
of three triangles, denoted $(\Delta_2)^3$. Using the moment
map of toric geometry, each such polyhedral complex can be 
identified with the real positive points of the 
corresponding subscheme of $(\mathbb
P^2)^3$.  The following conditions characterize those subcomplexes of $H_{3,3}$
whose corresponding monomial ideal has the right multigraded Hilbert function:

\begin{itemize}
\item For every vector of non-negative
integers $(t_1, t_2, t_3)$ summing to $2$, there is exactly one $2$-dimensional
cell consisting of the product of a $t_1$-dimensional cell of $\Delta_2$, a
$t_2$-dimensional cell, and a $t_3$-dimensional cell. 
\item The complex contains exactly ten $0$-cells, $15$ $1$-cells,
and six $2$-cells.
\end{itemize}
This characterization is sufficient to show that Table~\ref{tbl:monomials} is
complete. By the condition on the number of $1$-cells, each triangle must meet
at least two of the squares. Thus, each pair of squares must be adjacent or be
connected by a triangle, and so all three squares must meet in a common point.
There are four possible configurations of the squares: either $0$, $1$, $2$,
or~$3$ edges common to multiple squares. The monomial ideals can be enumerated
by considering all possible ways to attach the additional triangles to these
configurations.

The monomial ideals form a poset based on containment within the closures of
their orbits, illustrated in Figure~\ref{fig:monomial-poset}. 
 Each ideal is drawn as a $2$-dimensional subcomplex of
$(\Delta^2)^3$. The subcomplex is drawn abstractly,
but the embedding amounts to a choice of labellings. The bold lines indicate
that an additional triangle is attached along that edge.
By orbits, we mean orbits under the disconnected group which is
generated by multiplying the first column by an arbitrary matrix and by the
discrete action of permuting the columns.  The number on the lower
right is the dimension of the tangent space of $H_{3,3}$ at that
monomial ideal. The ranking is by the dimension of (every component of) the
orbit of the monomial ideal: 9, 10, 11, or~12.

The maximal elements of the poset in Figure~\ref{fig:monomial-poset}
 correspond to the ``planar''
complexes, i.e.\ those such that no edge contains more than two 2-cells. By
fixing a isomorphisms between the three copies of $\Delta_2$, we get a
projection from $(\Delta_2)^3$ onto $3\Delta_2$.
In the case of ideals 1, 2, 3, 4, and~5 the corresponding subcomplexes
project to tilings of $3\Delta_2$. In these cases, there is
a monomial ideal in the orbit which is in the toric Hilbert scheme of
 $\Delta_2\times \Delta_2$, and the corresponding triangulation of 
 $\Delta_2\times \Delta_2$
is related to the tiling of $3\Delta_2$ by the Cayley trick. 
The
tilings in~\cite[Figure~5]{Santos} correspond to the monomial ideals
1, 3, 4, 5, and~2 in this order. Each triangulation is regular, 
and the ideals are smoothable, even in
the toric Hilbert scheme. Here, the toric Hilbert is the
subscheme of $H_{3,3}$ obtained by fixing the 
Hilbert function of $I_2(X)$ with respect to the finer grading
given by both row degrees and column degrees.
For details, references and further information see
\cite[\S 2]{HS} and \cite[Theorem~2]{Santos}.

\begin{figure}
\begin{centering}
\includegraphics{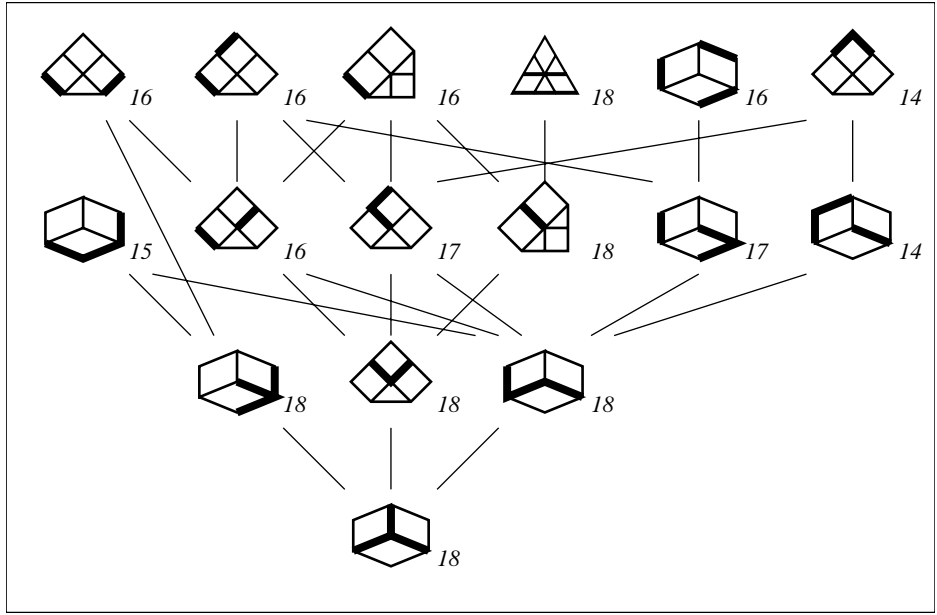}
\par\end{centering}
\caption{Partial ordering of the monomial ideals on $H_{3,3}$}
\label{fig:monomial-poset}
\end{figure}

\section{Deligne schemes and their special fibers}

The original motivation which started this project was a discussion
with Annette Werner about tropical convexity, and its connection to affine buildings
and moduli of hyperplane arrangements as developed by
Keel and Tevelev \cite{KT}. Our aim was to understand
the Deligne schemes of \cite[\S 1]{KT} and their special fibers in 
the concrete language of combinatorial commutative algebra.
We found that the multigraded Hilbert scheme $H_{d,n}$
offers a suitable framework for studying Deligne schemes
and their arithmetic. In this section we briefly discuss the set-up
and the connection to the combinatorial results in \cite{BY, JSY}.
We plan to pursue this further in a joint project with Annette Werner.

Let $K$ be an algebraically closed field with a non-trivial non-archimedean
absolute value, let $k$ be the residue field of $K$,
and $R$ the valuation ring of $K$.
In computational studies (such as~\cite{JSY}) we usually relax
the requirement that $K$ be algebraically closed,
and we work with the Gr\"obner-friendly
scenario $\, K = \Q(z)$, $R = \Q[z]$ and $k= \Q$.
Let $\mathcal{B}$ denote the Bruhat-Tits building associated
with the group ${\rm PGL}(d)$ over $K$ as defined in \cite{JSY, KT}.
The building~$\mathcal{B}$ is an infinite simplicial complex of dimension
$d-1$ whose vertices are the equivalence classes of
$R$-submodules of $K^d$ having maximal rank $d$.

Let $Y = \{Y_1,\ldots,Y_n\}$ be a finite set of vertices of the 
affine building~$\mathcal{B}$. Following \cite[Definition 1.8]{KT},
we let $\mathbb{S}_Y$ denote the corresponding join of projective spaces
over $R$, and we write $S_Y$ for its special fiber over $k$.
In the special case when $Y$ is a convex subset of~$\mathcal{B}$,
a classical result of Mustafin~\cite{Mus} states that $\mathbb{S}_Y$
is semi-stable over $R$, which implies that
$S_Y$ has smooth irreducible components with normal crossings.
In this section we allow $Y$ to be any finite set of vertices -- not
necessarily convex -- of the building~$\mathcal{B}$.
Following \cite[1.10]{KT}, we shall call $\mathbb{S}_Y$ the
{\em Deligne scheme} of the subset~$Y \subset \mathcal{B}$.

We now describe the Deligne scheme $\,\mathbb{S}_Y$ 
and its special fiber $S_Y$ in concrete terms.
The configuration $Y$ is represented by $(Y_1, Y_2, \ldots, Y_n)$, an 
$n$-tuple  of invertible $d \times d$-matrices 
with entries in the field $K$. This data is the input
for the algorithm of \cite{JSY} which computes the
convex hull of $Y$ in $ \mathcal{B}$.
Let $I_2(X)$ be the ideal of $2 {\times} 2$-minors
of a $d {\times} n$-matrix of unknowns.
We consider the transformed ideal $Y  \circ I_2(X)$ in $K[X]$,
and we intersect it with $R[X]$:
\begin{equation}
\label{GetSpecFib}
 \mathbb{I}_Y \,\,\, = \,\,\, (Y \circ I_2(X)) \,\cap \, R[X] . 
 \end{equation}
We call $\mathbb{I}_Y \subset R[X]$ the {\em Deligne ideal} of the 
point configuration $\,Y \subset \mathcal{B}$.
The image of $\mathbb{I}_Y$ under the specialization
$R \rightarrow k$ is denoted $I_Y \subset k[X]$. We
call $I_Y$ the {\em special fiber ideal} of $Y$.
This nomenclature is justified as follows.

\begin{remark}
\label{rem:deligne}
The Deligne scheme $\,\mathbb{S}_Y$  coincides with
the subscheme of
$\,(\PP^{d-1}_R)^n \,$ defined by the Deligne ideal 
$\mathbb{I}_Y$, and its special fiber
$S_Y$ coincides with the subscheme of $(\PP^{d-1}_k)^n$
defined by the special fiber ideal $I_Y$.
\end{remark}

Remark \ref{rem:deligne}  implies that the Deligne scheme $\mathbb{S}_Y$ is
a point in the multigraded Hilbert scheme $H_{d,n}(R)$ over the valuation ring $R$,
and its special fiber $S_Y$ is a point in $H_{d,n}(k)$ over its residue field $k$.
Thus our study in Sections~2 to~5 is relevant for Deligne schemes.
In particular, Theorem \ref{thm:radical} implies:

\begin{cor}
The special fiber $\,S_Y$ of the Deligne scheme $\,\mathbb{S}_Y$ is reduced.
\end{cor}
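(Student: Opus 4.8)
The plan is to identify $I_Y$ as a point of the multigraded Hilbert scheme $H_{d,n}$ over the residue field $k$ and then invoke Theorem~\ref{thm:radical}. By the discussion following Remark~\ref{rem:deligne}, the Deligne ideal $\mathbb{I}_Y$ already defines a point of $H_{d,n}(R)$; that is, the family $R[X]/\mathbb{I}_Y$ is $R$-flat, with each graded piece $(R[X]/\mathbb{I}_Y)_u$ a free $R$-module whose rank is the prescribed value in~(\ref{eqn:ourHF}). The task is thus to transport this to the special fiber.

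First I would check that specialization keeps us inside the Hilbert scheme. Because $R[X]/\mathbb{I}_Y$ is flat over $R$, one has $\operatorname{Tor}_1^R(R[X]/\mathbb{I}_Y, k) = 0$, so tensoring the exact sequence $0 \to \mathbb{I}_Y \to R[X] \to R[X]/\mathbb{I}_Y \to 0$ with $k$ over $R$ stays exact. Hence the image $I_Y$ of $\mathbb{I}_Y$ agrees with $\mathbb{I}_Y \otimes_R k$, and $k[X]/I_Y$ has the same graded dimensions as $R[X]/\mathbb{I}_Y$, namely the Hilbert function~(\ref{eqn:ourHF}). Equivalently, composing the classifying $R$-point $\operatorname{Spec} R \to H_{d,n}$ with the morphism $\operatorname{Spec} k \to \operatorname{Spec} R$ induced by $R \to k$ yields a $k$-point of $H_{d,n}$, which by Remark~\ref{rem:deligne} is exactly the special fiber $S_Y$ with defining ideal $I_Y$.

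With $I_Y$ recognized as a point of $H_{d,n}(k)$, the reducedness follows immediately from Theorem~\ref{thm:radical} applied over the field $k$: that theorem only requires the ground field to be infinite, which holds here since the residue field of an algebraically closed valued field is again algebraically closed, and in the Gr\"obner-friendly scenario one has $k = \Q$. Therefore $I_Y$ is radical, $k[X]/I_Y$ has no nilpotents, and the subscheme $S_Y \subset (\PP^{d-1}_k)^n$ it defines is reduced, with no dependence on the characteristic. The main (if modest) obstacle is precisely the passage from $R$-flatness of $\mathbb{I}_Y$ to the statement that $I_Y$ is an honest point of $H_{d,n}(k)$: it is the flatness over the valuation ring $R$ that prevents any graded component from dropping dimension upon specialization, and without it the conclusion would fail.
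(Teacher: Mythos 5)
Your proposal is correct and follows essentially the same route as the paper: Remark~\ref{rem:deligne} identifies $\mathbb{S}_Y$ as a point of $H_{d,n}(R)$ and its special fiber as a point of $H_{d,n}(k)$, and Theorem~\ref{thm:radical} then gives radicality, hence reducedness. The flat base-change details you supply (Tor vanishing, preservation of the graded Hilbert function under specialization) are left implicit in the paper but are exactly the content behind its one-line deduction.
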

 
The formula (\ref{GetSpecFib}) translates into the following Gr\"obner-based
algorithm for computing Deligne schemes and their special fibers
when $K = \Q(z), R = \Q[z], k = \Q$.
The input data is an $n$-tuple  $Y(z)$ of
invertible $d \times d$-matrices whose entries are
rational functions in one variable $z$. 
The Deligne ideal $\mathbb{I}_{Y(z)}$ is an ideal
in the polynomial ring $\Q[z,X]$. It is computed as follows. 
We replace the $j$-th column of the matrix $X = (x_{ij})$
by its left multiplication with the $j$-th input matrix $Y_j(z)$.
We then form the $2 \times 2$-minors of the resulting matrix
and multiply each generator by a power of $z$ to 
get an ideal~$L$ in $\Q[z,X]$. The Deligne ideal is then 
obtained by saturation as follows:
\begin{equation}
\label{sat1}
 \mathbb{I}_{Y(z)} \,\, = \,\, \bigl( L : \langle z \rangle^\infty \bigr). 
\end{equation}
The special fiber ideal is obtained by setting $z$ to zero in the Deligne ideal:
\begin{equation}
\label{sat2}
 I_{Y(z)} \,\, = \,\, \mathbb{I}_{Y(z)}|_{z=0}. 
 \end{equation}
 
 This algorithm generalizes the construction
of Block and Yu \cite{BY} which concerns the special 
case when the configuration $Y(z)$ lies in one apartment 
of the Bruhat-Tits building~$\mathcal{B}$.
Algebraically, this means that the $Y_j(z)$ are diagonal matrices,
and, geometrically, the apartment of $\mathcal{B}$ is identified
with the standard triangulation of tropical projective space $\TP^{d-1}$.  
If the diagonal entries of the $Y_j(z)$ are monomials with 
sufficiently generic exponents, then 
(\ref{sat1})-(\ref{sat2}) simply amounts  to a
Gr\"obner basis computation for the ideal~$I_2(X)$.
Block and Yu \cite{BY} consider the Alexander dual 
of the initial monomial ideal of $I_2(X)$, and they showed
that minimal free resolution of that Alexander dual is
cellular. It represents the convex hull of $Y(z)$ in 
$\TP^{d-1}$, and hence in $ \mathcal{B}$.
We conjecture that this method can be adapted to
compute the convex hull of $Y(z)$ even if
the matrices $Y_j(z)$ do not lie in a common apartment of $\mathcal{B}$.

We note that the algorithm (\ref{sat1})-(\ref{sat2}) is not very practical for computing
the special fiber ideal $I_{Y(z)}$. We found that  running the saturation step
(\ref{sat1}) in naive manner in {\tt Macaulay 2}  is too slow for interesting values of $d$ and $n$.

Instead, we propose the following approach. If the 
$Y(z)$ are ``sufficiently generic,'' 
then we replace each $Y_j(z)$ by a ``nearby''
matrix of the special form
\begin{equation}
\label{approx1}
\,\,Y_j(z) \quad \approx \quad  {\rm diag}(z^{w_{1j}}, \ldots, z^{w_{dj}}) \cdot A_i\, . 
\end{equation}
Here the weights $w_{ij}$ are generic rational numbers that
specify a term order~$>$ on the polynomial ring $\Q[X]$, and
$A = (A_1,\ldots, A_n)$ is a tuple of invertible matrices over $\Q$.
The precise meaning of the approximation (\ref{approx1}) is that
\begin{equation}
\label{approx2}
 I_{Y(z)}\,\, = \,\,  {\rm in}_>(A \circ I_2(X)). 
 \end{equation}
 When this holds 
   the  special fiber $S_{Y(z)}$ of the Deligne scheme 
 $\mathbb{S}_{Y(z)}$ is given by
a squarefree monomial ideal $I_{Y(z)}$.
The point is that the right hand side of
 (\ref{approx2}) can be computed much faster in practice than evaluating
 (\ref{sat1}). It amounts to  computing a Gr\"obner basis
of the transformed ideal $A \circ I_2(X)$ in the polynomial ring $\Q[X]$.
When the $A_i$ are  diagonal matrices over $\Q$ this
is precisely the algorithm of Block and Yu \cite{BY} for convex hulls in $\TP^{d-1}$.

\bigskip

\noindent {\bf Acknowledgements.}
Both authors were supported by the U.S.~National 
Science Foundation (DMS-0354321, DMS-0456960 and DMS-0757236).
We thank Michel Brion, Aldo Conca and 
Annette Werner for helpful comments.

\medskip

\noindent {\bf Authors' addresses:}

\smallskip

\noindent Dustin Cartwright and Bernd Sturmfels, Dept.~of Mathematics,
University of California, Berkeley, CA 94720, USA,
\url{{dustin,bernd}@math.berkeley.edu}

\end{document}